\definecolor{shadecolor}{rgb}{0.8,0.8,0.8}
\newtheorem{theorem}{Theorem}[section]
\newtheorem{lemma}[theorem]{Lemma}
\newtheorem{proposition}[theorem]{Proposition}
\newtheorem{corollary}[theorem]{Corollary}
\newcommand{\specexercise}[1]{}
\newenvironment{proof}{{\flushleft \emph{Proof}:}}{\hfill\ding{110}}
\newenvironment{remark}{{\flushleft \fontfamily{pzc}\bfseries\large Remark:}}{}
\newcommand{\g}{\mathfrak{g}}
\newcommand{\h}{\mathfrak{h}}
\newcommand{\euc}{\mathfrak{e}}
\newcommand{\Vol}{\text{Vol}}
\newcommand{\Volg}{\text{Vol}_\g}
\newcommand{\VolgS}{\text{Vol}_{\g|_\S}}
\newcommand{\II}{{\text{II}}}
\newcommand{\M}{{\mathcal{M}}}
\newcommand{\TS}{{T\mathcal{S}}}
\newcommand{\NS}{{\mathcal{NS}}}
\newcommand{\N}{\mathcal{N}}
\renewcommand{\S}{\mathcal{S}}
\newcommand{\R}{\mathbb{R}}
\newcommand{\dist}{\operatorname{dist}}
\newcommand{\SO}[1]{\text{SO}(#1)}
\newcommand{\id}{\operatorname{Id}}
\newcommand{\sym}{\operatorname{Sym}}
\renewcommand{\skew}{\operatorname{Skew}}
\newcommand{\e}{\varepsilon}
\newcommand{\inner}[1]{\left\langle#1\right\rangle}      
\newcommand{\pl}{\partial}
\newcommand{\Ga}{\Gamma}
\newcommand{\sig}{\sigma}
\newcommand{\weakly}[1]{\stackrel{#1}{\rightharpoonup}}
\newcommand{\Riem}{\mathcal{R}}
\newcommand{\qperp}{q^\perp}
\newcommand{\Pperp}{P^\perp}
\newcommand{\Ppar}{P^\parallel}
\def\Xint#1{\mathchoice
{\XXint\displaystyle\textstyle{#1}}%
{\XXint\textstyle\scriptstyle{#1}}%
{\XXint\scriptstyle\scriptscriptstyle{#1}}%
{\XXint\scriptscriptstyle\scriptscriptstyle{#1}}%
\!\int}
\def\XXint#1#2#3{{\setbox0=\hbox{$#1{#2#3}{\int}$ }
\vcenter{\hbox{$#2#3$ }}\kern-.6\wd0}}
\newcommand{\dashint}{\Xint-}
\newcommand{\teq}[1]{\stackrel{\mathrm{#1}}{=}}
\newcommand{\brk}[1]{\left(#1\right)}          
\newcommand{\Brk}[1]{\left[#1\right]}          
\newcommand{\BRK}[1]{\left\{#1\right\}}        
\newcommand{\Abs}[1]{\left| #1 \right|}        
\newcommand{\beq}{\begin{equation}}
\newcommand{\eeq}{\end{equation}}
\newcommand{\propref}[1]{Proposition~\ref{#1}}
\newcommand{\lemref}[1]{Lemma~\ref{#1}}
\numberwithin{equation}{section}
\begin{document}

\title{On the role of curvature in the elastic energy of non-Euclidean thin bodies}
\author{Cy Maor\footnote{Department of Mathematics, University of Toronto.} \,and Asaf Shachar\footnote{Institute of Mathematics, The Hebrew University.}}
\date{}
\maketitle

\abstract{
We prove a relation between the scaling $h^\beta$ of the elastic energies of shrinking non-Euclidean bodies $\S_h$ of thickness $h\to 0$, and the curvature along their mid-surface $\S$.
This extends and generalizes similar results for plates \cite{BLS15,LRR15} to any dimension and co-dimension.
In particular, it proves that the natural scaling for non-Euclidean rods with smooth metric is $h^4$, as claimed in \cite{AAESK12} using a formal asymptotic expansion.
The proof involves calculating the $\Gamma$-limit for the elastic energies of small balls $B_h(p)$, scaled by $h^4$, and showing that the limit infimum energy is given by a square of a norm of the curvature at a point $p$.
This $\Gamma$-limit proves asymptotics calculated in \cite{AKMMS16}.
}

\tableofcontents

\setlength{\parindent}{0cm}
\addtolength{\parskip}{4pt}
\section{Introduction and main results}

\subsection{Non-Euclidean elasticity}
\emph{Non-Euclidean}, or \emph{incompatible} elasticity is an elastic theory for bodies that do not have a reference configuration, i.e.~a stress-free configuration (therefore they are commonly referred to as \emph{pre-stressed} bodies). 
This theory has numerous applications --
it originated in the 1950's in the context of crystalline defects (see e.g.~\cite{Kon55,BBS55,Bs56}), and in recent years it is motivated by studies of growing tissues, thermal expansion, and other mechanics involving differential expansion or shrinkage \cite{AESK11, AAESK12, OY09, KES07, GSD16, AKMMS16}.

Mathematically, a pre-stressed elastic body is modeled as an $n$-dimensional compact, oriented Riemannian manifold $(\M^n,\g)$. It is "incompatible" if $\g$ is not flat.
Given a configuration $u:\M\to \R^n$, the elastic energy density at a point $p\in \M$ measures the \emph{strain} -- the discrepancy between the intrinsic metric $\g$ and the actual metric $u^\star \euc$ induced by the configuration ($\euc$ being the Euclidean metric in $\R^n$).
A prototypical "Hookean" energy is 
\beq
\label{eq:def_elastic_energy}
E_\M : W^{1,2}(\M;\R^n) \to \R,\qquad E_\M[u]:= \dashint_\M \dist^2(du, \SO{\g,\euc})\, d\Volg,
\eeq
where $\SO{\g,\euc}_p$ is the set of orientation preserving isometries $T_p\M\to \R^n$, and the distance is measured with respect to the inner-product norm on $T_p^*\M\otimes \R^n$ induced by $\g_p$ and the Euclidean metric $\euc$.
Representing all of the above in a positive orthonormal basis at $T_p\M$, $\SO{\g,\euc}_p$ and $\dist$ reduces to $\SO{n}$ and the Frobenius distance.
The notation $\dashint_\M$ means the integral normalized by the volume, that is $\dashint_\M f\, d\Volg:= \frac{1}{\Volg(\M)}\int_\M f d\Volg$; this will be important as we consider the elastic energies of a family of shrinking manifolds.

The definition of $E_\M$ suggest a second notion of incompatibility -- $(\M,\g)$ is incompatible if $\inf E_\M>0$ even in the absence of boundary conditions. 
In \cite[Theorem~2.2]{LP10} it was shown that this is equivalent to the first (geometric) notion of incompatibility -- $\inf E_\M =0$ if and only if $\Riem\equiv 0$, where $\Riem$ is the Riemann curvature tensor of $\g$ (see also \cite{KMS17} for a more general result between arbitrary manifolds).

Intuitively, one expect that the "more curvature" a body has, the less it is compatible with $\R^n$, and therefore the energy $E_{\M}$ would be higher.
A natural question is therefore to make the previous result quantitative -- to find a lower bound on the energy in terms of the curvature.
This problem is highly non-trivial. 
First, it is a global problem as it involves the entire geometry of the manifold.
second, $E_{\M}$ does not depend explicitly on the curvature, as the integrand involves only the metric $\g$ and not its derivatives.
The only general result we are aware of is \cite{KS12}, which gives a lower bound in terms of the scalar curvature for positively curved manifolds (and in dimension 2 for general manifolds). 
However, this bound is not very explicit, and in particular it is quite difficult to obtain from it effective bounds for \emph{thin elastic bodies}, which are the main focus of this paper.
These are described in the next section.

\subsection{Thin elastic bodies}
Much of the research in non-Euclidean elasticity, both in the physics and mathematics literature, is concerned with thin elastic bodies, i.e.~bodies that have one or more slender dimensions.
These include plate/shell theory and rod theory, corresponding to one and two slender dimensions (out of $3$), respectively.
The goal of these theories is to obtain the asymptotic behavior of the thin body as the thickness tends to zero.

Mathematically, the problem can be formulated as follows:
Let $(\M^n,\g)$ be a Riemannian manifold. For simplicity, assume that $\g$ is smooth (though for the results in this paper $C^2$ would suffice). 
Let $\S^k\subset\M^n$ be a compact $k$-dimensional oriented submanifold with Lipschitz boundary. $\S$ is the \emph{mid-surface} of the thin elastic body.
The thin elastic body $\S_h$ is the $h$-tubular neighborhood of $\S$ in $\M$.
More precisely, let $T\M|_\S = T\S \oplus \NS$ be the natural orthogonal decomposition, $\NS$ being the normal bundle of $\S$, and define
\beq
\label{eq:def_S_h}
\S_h := \BRK{ \exp_p(v) : p\in \S, v\in \NS, |v|\le h }.
\eeq

Two main (and interconnected) problems in the study of such bodies are finding the natural scaling of $\inf E_{\S_h}$ as $h\to 0$ (typically $\inf E_{\S_h} \sim h^\beta$ for some $\beta\ge 0$); and finding the limit of $\h^{-\beta} E_{\S_h}$ as $h\to 0$, which gives an effective elastic energy model for the mid-surface.
In the mathematics community, the last question is typically treated in the framework of $\Gamma$-convergence (based on the seminal results in the Euclidean case \cite{FJM02b,FJM06}).
We summarize below some of the main results in dimension reduction of non-Euclidean bodies that are relevant to this work (this does not aim to be a complete bibliography of the subject).

\paragraph{General dimension and codimension}
In \cite{KS14} a general $\Gamma$-convergence result was proved for any dimension and co-dimension, for the scaling $\beta = 2$. 
A corollary of their result is that $\inf E_{\S_h} = O(h^2)$ if and only if there exists $F\in W^{2,2}(\S;\R^n)$ and $\qperp\in W^{1,2}(\S;\NS^*\otimes \R^n)$ such that $dF\oplus \qperp\in \SO{\g,\euc}$.

\paragraph{Plates/shells ($n=3$, $k=2$)}
The case of plates and shells was initially treated in \cite{LP10,BLS15}, for the scaling $\beta = 2$.
Their results show that $\inf E_{\S_h} = O(h^2)$ if and only if $\S$ can be $W^{2,2}$ isometrically immersed in $\R^3$ (this is a special case of the results of \cite{KS14} mentioned above, in which the existence of $\qperp$ follows from the existence of the isometric immersion $F$). 
In \cite{BLS15,LRR15} it was shown, under the assumption that the metric $\g$ does not change along the thin dimension,
that $\inf E_{\S_h} = o(h^2)$ if and only if 
\[
|\Riem_{1212}| = |\Riem_{1213}| = |\Riem_{1223}| = 0,
\] 
where $\Riem$ is the curvature tensor of $\M$ and the first two coordinates parametrize the mid surface.
Furthermore, they proved that in this case $\inf E_{\S_h} = O(h^4)$, and that if $\inf E_{\S_h} = o(h^4)$ then the whole curvature tensor $\Riem\equiv 0$ on $\S$.
The assumption that $\g$ does not change along the thin dimension then implies that $\Riem\equiv 0$ everywhere, hence $\inf E_{\S_h} = 0$ in this case.

We also note that in \cite{LRR15} a complete $\Gamma$-convergence result for $h^{-4} E_{\S_h}$ is proved.
See also \cite{ALL17} for other recent results in the $O(h^2)$ regime, as well as numerous results in the physics literature for this scaling, e.g.~\cite{SRS07,ESK08,ESK09,ESK11}.
Other scalings can be obtained due to external forces \cite{BK14}, or singular metrics \cite{Olb17,COT17}, but these are further away from the context of this paper.

\paragraph{Rods ($n=3$, $k=1$)}
For the rod case, it was shown in \cite[Section~8.2]{KS14} that $\inf E_{\S_h} = o(h^2)$.
It was later shown, by an uncontrolled formal expansion, that one expects $\inf E_{\S_h} = O(h^4)$ for a general non-Euclidean rod \cite{AAESK12}.

Some recent results on non-Euclidean rods include \cite{CRS17,KO18}; in both of them the setting is slightly different from ours, which results in a natural energy scaling of $h^2$ (rather than $h^4$). This is due to external forces in \cite{CRS17} or rougher metrics in \cite{KO18}.

\paragraph{Other limits}
In \cite{AKMMS16} the case of a body which is thin in all dimensions was considered, which corresponds to the case $k=0$, i.e.~$\S=\{p\}$ (in this paper's framework); in other words, to the "local" elastic energy around a point.
There they show, by an uncontrolled formal expansion, that $\inf E_{\S_h} \sim h^4$, unless the Riemannian curvature at $p$ is zero.

When there are external forces or boundary conditions that imply that $\inf E_{\S_h} \sim 1$, the dimensionally-reduced limit is called the membrane limit. 
In the context of incompatible elasticity, a $\Gamma$-convergence derivation of the membrane limit for every dimension and codimension was obtained in \cite{KM14} (following the Euclidean case \cite{LR95,LR96}); this is further away from the context of this paper because of the stretching boundary conditions. 

\subsection{Main results}
In this paper we generalize the relations between curvature and energy scaling of thin plates \cite{BLS15,LRR15}, to every dimension and co-dimension. Our results provide a unifying ground for most of the results mentioned above.

We start by proving a $\Gamma$-convergence result for the energies of shrinking balls around a point; we later "lift" this result to a general submanifold $\S$. Let $B_h(p)$ denote the ball of radius $h$ around a point $p\in \M$. We show the functionals $h^{-4} E_{B_h(p)}$ $\Gamma$-converge to the functional
\beq
\label{eq:limiting_functional}
	I_\Riem: W^{1,2}(B,\R^n)\to \R,\qquad 
	I_\Riem[f] = \dashint_B \Abs{\sym df - \frac{1}{6} \Riem_{kijl}x^k x^l }^2,
\eeq
where $\Riem_{kijl}$ are the components of the Riemann curvature tensor at $p$ for some choice of an orthonormal basis at $p$, $B$ is the unit ball in Euclidean space, and $\sym df$ is the symmetric gradient $(\sym df)_{ij} = \pl_i f^k\delta_{kj} + \pl_j f^k\delta_{ki}$.
Note that minimizing $I_\Riem$ is equivalent to a pure-traction linear elastic problem in the ball, with smooth body and traction forces (see \cite[Section~6.3]{Cia88}).
The exact formulation of the $\Gamma$-convergence result is given in Theorem~\ref{thm:Gamma_convergence}, after introducing some required notations.

Using this $\Gamma$-convergence result, we prove the following theorem:
\begin{theorem}
\label{thm:curvature_norm_asymptotic}
\beq
\label{eq:thm:curvature_norm_asymptotic}
\lim_{h\to 0}  \frac{1}{h^4} \inf E_{B_h(p)} = |\Riem_p|^2,
\eeq
where $| \cdot |$ is an inner-product induced norm on the subspace of $(T_p^*\M)^3 \otimes T_p\M$ containing the possible curvature tensors at $p$. This norm is defined, in normal coordinates centered at $p$, as $|\Riem| := \sqrt{\min I_{\Riem}}$, where $I_{\Riem}$ is defined in \eqref{eq:limiting_functional}.
\end{theorem}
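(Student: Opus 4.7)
The plan is to deduce \eqref{eq:thm:curvature_norm_asymptotic} from the $\Ga$-convergence result \thmref{thm:Gamma_convergence} by combining it with an equi-coercivity estimate for the rescaled energies. Standard $\Ga$-convergence theory then gives $h^{-4}\inf E_{B_h(p)} \to \min I_\Riem$, and by the definition given in the statement above, $\min I_\Riem = |\Riem_p|^2$.

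Concretely, I would work in normal coordinates centered at $p$, in which $\g_{ij}(x) = \delta_{ij} + O(|x|^2)$ with the leading correction expressed in terms of $\Riem_p$. Given a configuration $u:B_h(p)\to\R^n$, rescale the domain by $x \mapsto hy$ with $y\in B$ and subtract a rigid motion $(R_h,c_h)\in \SO{n}\ltimes \R^n$:
\[
f_h(y) := \frac{1}{h^2}\brk{R_h^{-1} u(\exp_p(hy)) - c_h - hy}.
\]
The factor $h^{-2}$ is calibrated so that the rescaled energy $h^{-4}E_{B_h(p)}[u]$ has leading-order integrand $|\sym df_h - \tfrac{1}{6} \Riem_{kijl}y^k y^l|^2$, matching $I_\Riem$. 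The upper bound $\limsup_{h\to 0} h^{-4}\inf E_{B_h(p)} \le |\Riem_p|^2$ is then immediate: take a minimizer $f^*$ of the quadratic functional $I_\Riem$, whose existence modulo infinitesimal rigid motions follows from Korn's inequality in the spirit of \cite[Section~6.3]{Cia88}, and invoke the recovery-sequence half of \thmref{thm:Gamma_convergence}.

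The matching lower bound requires compactness, which is the main obstacle. Choose approximate minimizers $u_h$ with $h^{-4}E_{B_h(p)}[u_h] \le h^{-4}\inf E_{B_h(p)} + o(1)$ and define $f_h$ as above for a well-chosen sequence $(R_h,c_h)$; one must show that $\{f_h\}$ is uniformly bounded in $W^{1,2}(B;\R^n)$. This is obtained from the Friesecke--James--M\"uller geometric rigidity theorem applied on $B_h(p)$, with curvature corrections coming from the fact that the target set is $\SO{\g,\euc}$ rather than $\SO{n}$; it produces a single rotation $R_h$ with $\int_{B_h(p)}|du_h - R_h|^2\, d\Volg \lesssim h^{n+4}$, and choosing $c_h$ via Poincar\'e's inequality then bounds $\|f_h\|_{W^{1,2}(B)}$. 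Extracting a weak limit $f$ and applying the $\Ga$-liminf inequality of \thmref{thm:Gamma_convergence} yields $\liminf_{h\to 0} h^{-4}E_{B_h(p)}[u_h] \ge I_\Riem[f] \ge \min I_\Riem = |\Riem_p|^2$. The technical heart is thus producing a form of geometric rigidity adapted to a shrinking Riemannian ball with the correct $h^2$ scaling of $du_h - R_h$; once this is available, the remainder is the standard equi-coercivity plus $\Ga$-convergence argument.
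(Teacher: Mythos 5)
Your argument for the convergence of the rescaled infima is essentially the paper's own: the paper's Lemma~\ref{lm:convergence_of_min} runs exactly this standard scheme (approximate minimizers, an a priori bound $\inf E_{B_h(p)}=O(h^4)$ — obtained there from the exponential map, Lemma~\ref{lm:asymptotic_exp}, whereas you get it from the recovery sequence, which is equally fine — then the compactness and liminf parts of Theorem~\ref{thm:Gamma_convergence}, and the recovery sequence for the limsup). Note also that the rigidity estimate you describe as ``the technical heart'' is not something you need to reprove: it is already part 1(a)--(b) of Theorem~\ref{thm:Gamma_convergence}, which you are invoking anyway.

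The genuine gap is that you stop at $\lim_{h\to 0} h^{-4}\inf E_{B_h(p)}=\min I_\Riem$ and treat the identification with $|\Riem_p|^2$ as purely definitional. The theorem asserts more: that $\Riem\mapsto\sqrt{\min I_\Riem}$ is an \emph{inner-product induced norm} on the space of curvature tensors at $p$. This is the substantive half of the statement (and what makes the result a quantitative curvature--energy relation, used later in part 4 of Theorem~\ref{thm:energy_scaling_general}), and your proposal does not address it. The paper proves homogeneity and the triangle inequality by direct manipulation of $I_\Riem$, the parallelogram law via the identity $2I_{\Riem^1}[f_1]+2I_{\Riem^2}[f_2]=I_{\Riem^1+\Riem^2}[f_1+f_2]+I_{\Riem^1-\Riem^2}[f_1-f_2]$ applied to minimizers, and — the key point — definiteness: $\min I_\Riem=0$ iff there exists $f$ with $\sym df=\frac16\Riem_{kijl}x^kx^l$, which by the Saint-Venant compatibility conditions and the symmetries of the curvature tensor forces $\Riem=0$. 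Without this last computation you have not excluded that the limit vanishes for some nonzero curvature, i.e.\ you have not shown $|\cdot|$ is a norm at all; your proof as written establishes only the $\Gamma$-convergence bookkeeping, not the curvature content of the theorem.
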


\begin{remark}
Note that $|\cdot|$, being an inner-product induced norm on a finite dimensional space, is of the form
$|\Riem_p|^2 = a^{ijklabcd}\Riem_{ijkl}\Riem_{abcd}$, 
where $\Riem_{ijkl}$ are the components of $\Riem_p$ in some orthonormal basis in $T_p\M$.
\eqref{eq:limiting_functional} implies that the constants $a^{ijklabcd}$ do not depend on $p$, and in this sense the norm is "point-independent".
In particular, the map $p \mapsto |\Riem_p|$ is continuous.
\end{remark}

Theorem~\ref{thm:Gamma_convergence} and Theorem~\ref{thm:curvature_norm_asymptotic} provide a "local" estimate of the infimal elastic energy in terms of the curvature.
Moreover, they prove the correctness of the formal asymptotics derived in \cite{AKMMS16}.

We then proceed to prove our main theorem regarding thin manifolds, thus establishing the relations between curvature and energy scaling of thin bodies in general dimension and co-dimension:
\begin{theorem}
\label{thm:energy_scaling_general}
\begin{enumerate}
	\item \cite{KS14}: 
	There exists $F\in W^{2,2}(\S;\R^n)$ and $\qperp\in W^{1,2}(\S;\NS^*\otimes \R^n)$ such that $dF\oplus \qperp\in \SO{\g,\euc}$ a.e.~if and only if
		\beq
		\label{eq:energy_O_h_squared}
		\inf E_{\S_h} = O(h^2).
		\eeq
	\item \beq
		\label{eq:energy_o_h_squared}
		\inf E_{\S_h} = o(h^2)
		\eeq
		if and only if there exist smooth maps $F:\S\to \R^n$ and $q^\perp: \NS\to \R^n$ such that $dF \oplus \qperp \in \SO{\g,\euc}$ and $\nabla q^\perp = -dF\circ \II_{\S,\M}$, where $\II_{\S,\M}$ is the second fundamental form (the shape operator) of $\S$ in $\M$.
		In particular, using appropriate identifications (given by $F$ and $q^\perp$),  $\II_{F(\S),\R^n}$ coincides with $\II_{\S,\M}$ 
		-- in this sense, the first and second forms of $\S$ satisfy the Gauss-Codazzi-Ricci equations in $\R^n$. 
		Moreover, \eqref{eq:energy_o_h_squared} implies that
		\beq
		\label{eq:energy_O_h_fourth}
		\inf E_{\S_h} = O(h^4).
		\eeq

		\item
                \eqref{eq:energy_o_h_squared} further implies that $\Riem^\M(X,Y) = 0$ for every $X,Y\in T\S$.\footnote{Note that this does not imply that $\S$ is flat, which is $\Riem^\S \equiv 0$.}
                If $\S$ is simply connected, then the converse also holds.
		\item  
		\beq
		\label{eq:infimum_energy_bound}
		\inf E_{\S_h} \ge ch^4 \dashint_S |\Riem^\M|^2 \,d\VolgS + o(h^4)
		\eeq
		where $|\Riem^\M|$ is a norm on the curvature, defined below in Theorem~\ref{thm:curvature_norm_asymptotic}, and $c$ is a universal constant.
		In particular, if 
		\beq
		\label{eq:energy_o_h_fourth}
		\inf E_{\S_h} = o(h^4),
		\eeq
		then $\Riem^\M|_\S \equiv 0$, that is $\Riem^\M(X,Y) = 0$ for every $X,Y\in T\M|_\S$.
		Furthermore, if \eqref{eq:energy_o_h_fourth} holds, $\S$ is simply-connected and $\Riem^\M$ is parallel along a foliation of curves emanating from $\S$, we have that for small enough $h$, $\S_h$ can be isometrically immersed in $\R^n$, hence $\inf E_{\S_h}=0$. 
\end{enumerate}
\end{theorem}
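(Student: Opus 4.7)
The plan divides into the four statements; Statement 1 is cited from \cite{KS14}, so the real work is in the remaining three, which I would treat as follows.

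\textbf{Statement 2.} For the implication ``smooth $(F,\qperp)$ with the stated relations $\Rightarrow \inf E_{\S_h} = O(h^4)$'', I would construct the explicit recovery map $u_h(\exp_p(v)) = F(p) + \qperp(v) + w_h(p,v)$ for $p\in\S$ and $v\in\NS_p$ with $|v|\le h$, where $w_h$ is an explicit quadratic-in-$v$ correction chosen to cancel the linear-in-$v$ error in matching $du_h$ to $\SO{\g,\euc}$. The hypothesis $\nabla \qperp = -dF\circ \II_{\S,\M}$ is exactly what makes the first-order term in the thin direction vanish, so that $\dist(du_h, \SO{\g,\euc}) = O(h^2)$ pointwise and thus $E_{\S_h}[u_h] = O(h^4)$. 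For the converse ``$o(h^2)\Rightarrow$ existence of $(F, \qperp)$'', I would first invoke Statement 1 to obtain $(F, \qperp) \in W^{2,2} \times W^{1,2}$ with $dF \oplus \qperp \in \SO{\g,\euc}$; the strict improvement from $O(h^2)$ to $o(h^2)$ should force the next-order Taylor coefficient of the optimal sequence in the thin direction to vanish, which upon expansion yields precisely the PDE $\nabla \qperp = -dF\circ \II_{\S,\M}$. Smoothness then upgrades bootstrap-style from this first-order PDE combined with the algebraic $\SO{\g,\euc}$-constraint. The Gauss--Codazzi--Ricci identification of $\II_{F(\S),\R^n}$ with $\II_{\S,\M}$ is a direct computation with the adapted frame $dF\oplus \qperp$.

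\textbf{Statement 3.} Given $(F,\qperp)$ from Statement 2, the fields $dF\oplus \qperp$ locally define an orthonormal $\R^n$-frame on $T\M|_\S$; pulling back the flat Euclidean connection by this frame and using $\nabla \qperp = -dF\circ \II_{\S,\M}$ shows the resulting connection agrees with the Levi-Civita connection of $\M$ on tangential directions, so the pulled-back (vanishing) Euclidean curvature equals $\Riem^\M$ restricted to $T\S\times T\S$. For the converse, assume $\S$ simply connected with $\Riem^\M(X,Y) = 0$ for $X,Y\in T\S$. Then the integrability conditions for the overdetermined system defining an adapted orthonormal frame on $T\M|_\S$ (essentially the Gauss--Codazzi--Ricci equations) are satisfied; a Frobenius/monodromy argument on the simply-connected $\S$ produces a global smooth orthonormal frame realizing $dF\oplus \qperp$, and integrating along $\S$ yields $F$.

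\textbf{Statement 4.} The crucial content is the lower bound \eqref{eq:infimum_energy_bound}, for which I would localize Theorem~\ref{thm:curvature_norm_asymptotic}. Pick a maximal $2h$-separated set $\{p_i\} \subset \S$, giving $|\{p_i\}| \sim \Volg(\S)/(c_k h^k)$, and consider the pairwise disjoint geodesic balls $B_h(p_i) \subset \S_h$ (disjointness follows because $d_\M \le d_\S$; inclusion in $\S_h$ follows since every point of $B_h(p_i)$ is within distance $h$ of $\S$). For any configuration $u$ on $\S_h$,
\[
\int_{\S_h} \dist^2(du, \SO{\g,\euc}) \,d\Volg \;\ge\; \sum_i \int_{B_h(p_i)} \dist^2(du, \SO{\g,\euc}) \,d\Volg \;\ge\; \sum_i \Volg(B_h(p_i))\,\brk{h^4 |\Riem_{p_i}|^2 + o(h^4)}
\]
by Theorem~\ref{thm:curvature_norm_asymptotic} applied to $u|_{B_h(p_i)}$. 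Since $\Volg(B_h(p_i))\sim c_n h^n$ and $\sum_i |\Riem_{p_i}|^2 h^k$ is a Riemann sum for $\int_\S |\Riem^\M|^2 \,d\VolgS$, dividing by $\Volg(\S_h)\sim c_{n-k}h^{n-k}\Volg(\S)$ yields \eqref{eq:infimum_energy_bound}. The final claim follows: $\inf E_{\S_h} = o(h^4)$ forces $\Riem^\M|_\S \equiv 0$; parallelism of $\Riem^\M$ along a normal foliation propagates the vanishing to all of $\S_h$ for small $h$; and simple-connectedness of $\S$ together with the contractible normal fibers gives simple-connectedness of $\S_h$, so flatness plus simple-connectedness identifies $(\S_h,\g)$ with a region of $\R^n$ isometrically.

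The main obstacle I anticipate is the localization step in Statement 4: Theorem~\ref{thm:curvature_norm_asymptotic} is a pointwise asymptotic statement in $p$, whereas the covering argument needs the $o(h^4)$ remainder to be uniform in the basepoint $p\in\S$. Compactness of $\S$, the continuity of $p\mapsto|\Riem_p|$ noted after Theorem~\ref{thm:curvature_norm_asymptotic}, and the fact that the limiting functional $I_\Riem$ depends on $\Riem$ only through its components in an orthonormal frame should allow one to upgrade pointwise to uniform convergence; carefully verifying this uniformity (likely by re-examining the proof of Theorem~\ref{thm:Gamma_convergence} with the basepoint as a continuous parameter) is where the bulk of the technical effort will be concentrated.
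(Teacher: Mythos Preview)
Your plan is essentially correct and tracks the paper's strategy closely: ball-packing for the lower bound, Gauss--Codazzi--Ricci for the curvature condition, and a recovery sequence built from $F$ and $\qperp$ for the upper bound. Two minor simplifications are available. First, for Statement~2 the paper leans more heavily on \cite{KS14}: the $\Gamma$-limit there is an explicit functional $E_\S(F,\qperp)$ whose vanishing is \emph{equivalent} to $dF\oplus\qperp\in\SO{\g,\euc}$ together with $\nabla\qperp=-dF\circ\II_{\S,\M}$, so both directions of the equivalence with $o(h^2)$ drop out immediately from convergence of infima, and no Taylor-expansion argument on the optimal sequence is needed; also, once the hypothesis $\nabla\qperp=-dF\circ\II_{\S,\M}$ holds, the linear map $u_h=F+\qperp$ (no quadratic correction $w_h$) already satisfies $\dist(du_h,\SO{\g,\euc})=O(h^2)$ pointwise. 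Second, your anticipated obstacle in Statement~4 is exactly the right one, and the paper resolves it precisely as you suggest: Theorem~\ref{thm:Gamma_convergence} is stated and proved for a \emph{moving} sequence of centers $p_h\to p$, which immediately yields uniform convergence over compact sets by a contradiction/subsequence argument.
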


We note that in the physically-interesting special case of rods ($k=1$), Theorem~\ref{thm:energy_scaling_general} takes a particularly simple form:
\begin{corollary}
\label{cor:energy_scaling_rods}
If $\dim \S=1$, then $\inf E_{\S_h} = O(h^4)$. 
If $\inf E_{\S_h} = o(h^4)$, then $\Riem^\M|_\S \equiv 0$.
\end{corollary}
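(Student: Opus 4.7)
Both claims are specializations of Theorem~\ref{thm:energy_scaling_general} to the case $k=\dim\S=1$, so the real work is to verify the hypothesis of part~(2) for an arbitrary 1-dimensional $\S$; the rest is harvested from parts~(2) and~(4) of that theorem.

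For the first claim, I would construct smooth maps $F:\S\to\R^n$ and $q^\perp:\NS\to\R^n$ with $dF\oplus q^\perp\in\SO{\g,\euc}$ and $\nabla q^\perp = -dF\circ\II_{\S,\M}$ by a direct ODE argument along $\S$. Parametrizing $\S$ by arclength $s$ and choosing a smooth orthonormal frame of $\NS$, the isometry requirement becomes pointwise algebraic, while the compatibility condition reduces to a linear first-order ODE for an orthonormal $n$-frame in $\R^n$ — essentially the Frenet-type equations governing how an adapted frame in $\R^n$ turns along $F(\S)$. Orthonormality is preserved by this ODE, so any initial orthonormal frame at an endpoint determines a unique smooth global solution on any component of $\S$ diffeomorphic to an interval; $F$ is then recovered by integrating its tangent. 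Theorem~\ref{thm:energy_scaling_general}(2) then yields $\inf E_{\S_h}=o(h^2)$, and in particular $\inf E_{\S_h}=O(h^4)$.

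The point requiring more care — and the main obstacle — is a closed component of $\S$ (a circle), where the above ODE may have non-trivial monodromy so that the global solution fails to be single-valued. To resolve this I would cover the circle by two contractible arcs and solve the ODE on each, producing local solutions $(F_1,q_1^\perp)$ and $(F_2,q_2^\perp)$; on each connected overlap the two frames differ by a rigid motion of $\R^n$, which already lies in $\SO{\g,\euc}$. After composing one solution with the appropriate rigid motion the two local frames can be aligned, and a smooth partition of unity can then interpolate across the remaining seam; since the leading-order discrepancy sits in the isometry group, the seam contributes only $O(h^4)$ to the elastic energy, preserving the overall scaling.

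Finally, the second claim is immediate from Theorem~\ref{thm:energy_scaling_general}(4): the lower bound
\[
\inf E_{\S_h}\ge c\,h^4\dashint_\S|\Riem^\M|^2\,d\VolgS + o(h^4)
\]
holds unconditionally, so $\inf E_{\S_h}=o(h^4)$ forces $\dashint_\S|\Riem^\M|^2\,d\VolgS = 0$; since $|\cdot|$ is a norm on the curvature tensor, this yields $\Riem^\M(X,Y)=0$ for all $X,Y\in T\M|_\S$, hence in particular $\Riem^\M|_\S\equiv 0$.
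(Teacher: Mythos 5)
Your handling of the second claim is exactly the paper's route: part 4 of Theorem~\ref{thm:energy_scaling_general} gives the lower bound unconditionally, and positivity of the norm $|\cdot|$ forces $\Riem^\M|_\S\equiv 0$. Your treatment of interval components of $\S$ is also essentially the intended argument: in the paper the existence of smooth $(F,q^\perp)$ with $dF\oplus q^\perp\in\SO{\g,\euc}$ and $\nabla q^\perp=-dF\circ\II_{\S,\M}$ comes from part 3 of Theorem~\ref{thm:energy_scaling_general}, whose curvature hypothesis $\Riem^\M(X,Y)=0$ for $X,Y\in T\S$ is vacuous when $\dim\S=1$ by antisymmetry and which applies because an interval is simply connected; your Frenet-type ODE is just the explicit one-dimensional instance of that existence statement (the frame $dF\oplus q^\perp$ is the parallel-transport trivialization along the curve, and $F$ is its development), and combined with part 2 it gives $o(h^2)$ and hence $O(h^4)$.

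The patching argument for closed components is a genuine gap, and the step cannot be repaired. The seam discrepancy is a fixed rigid motion $(Q,c)$ --- the monodromy of the frame ODE, essentially the holonomy of $\nabla^\M$ along the circle together with the failure of the development to close --- and it does not shrink with $h$. Blending $u_1$ and $u_2=Qu_1+c$ by a partition of unity produces a gradient containing $(u_1-u_2)\otimes d\phi$ plus a convex combination of maps differing by $Q$; when $(Q,c)\neq(\id,0)$ this stays a definite distance from $\SO{\g,\euc}$ on a seam occupying a fixed fraction of the (normalized) volume, so the seam contributes $O(1)$, not $O(h^4)$, and shrinking the seam width only inflates the $d\phi$ term. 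In fact no construction can work: $\inf E_{\S_h}=O(h^4)$ implies $o(h^2)$, which by the $\Gamma$-limit of \cite{KS14} forces $\min E_\S=0$, i.e.\ an exact global $(F,q^\perp)$, and on a circle this exists only when the monodromy is trivial and the development closes up. A concrete obstruction: take $\M$ to be $\R^n$ modulo a screw motion and $\S$ the core circle; the holonomy of $\nabla^\M$ along $\S$ is a nontrivial rotation, $\min E_\S>0$, and $\inf E_{\S_h}\sim h^2$. So the corollary holds, and follows from Theorem~\ref{thm:energy_scaling_general}, for simply connected $\S$, i.e.\ for $\S$ an interval --- the rod setting of \cite{AAESK12} and \cite[Section~8.2]{KS14}; your proof is complete in that case once the circle discussion is dropped, while for closed mid-curves the asserted $O(h^4)$ scaling is simply false in general.
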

This proves the correctness of the scaling that appeared in \cite{AAESK12}.

Part 1 of the Theorem~\ref{thm:energy_scaling_general} is merely a restatement of a corollary of the main result of \cite{KS14}, which we include for completeness.
Parts 2 and 3 generalize the conditions for a scaling of $o(h^2)$ in \cite{BLS15,LRR15}; 
they clarify the geometric implications of this scaling also in the plate case.
These are proved by carefully analyzing the limit functional obtained in \cite{KS14}.
We prove part 4 by using Theorem \ref{thm:curvature_norm_asymptotic}; more accurately, we need a slightly stronger version of it, Theorem \ref{thm:curvature_norm_asymptotic_refined}, which allows for perturbations of the centers of the balls.

We note that the choice of the energy \eqref{eq:def_elastic_energy} is for the sake of simplicity alone; all the results and proofs will hold (with some natural adjustments) for a more general energy density $W:T^*\M\otimes \R^n\to [0,\infty)$ as long as $W$ is $C^2$ near $\SO{\g,\euc}$ and
\[
W|_{\SO{\g,\euc}} = 0, \quad W(A)\ge c\dist^2(A,\SO{\g,\euc}), \quad W(RA) = W(A), 
\]
for some $c>0$, and every $R\in \SO{n}$. 

\paragraph{Open questions}
We list below several questions that arise in the context of this work, which are however not in of the scope of this paper; they will be considered in future works. 
\begin{enumerate}
\item The asymptotic analysis in \cite{AKMMS16} suggests that if one replaces $\R^n$ with a general ambient manifold, $h^{-4}\inf E_{B_h(p)}$ converges to a norm of the difference between the curvature at $p$ and the curvature at a point in the ambient manifold.
It would be very interesting to generalize Theorem~\ref{thm:curvature_norm_asymptotic} to this case.
\item In the last part of Theorem~\ref{thm:energy_scaling_general} we proved that $\Riem^\M|_\S \equiv 0$ is a necessary condition for the scaling $\inf E_{\S_h} = o(h^4)$.
We suspect that for a sufficient condition, one might also require that $\nabla \Riem^\M(X,Y) \equiv 0$ for $X,Y\in T\S$.
Obtaining a sufficient condition would require other tools than the ones used in this paper.
\item In this paper we only calculate the $\Gamma$-limit of $h^{-4} E_{\S_h}$ for the case where $\S$ is a point; for plates, this was done in \cite{LRR15}. A natural question is to calculate this for any dimension and codimension, in the spirit of the limit of $h^{-2} E_{\S_h}$ done in \cite{KS14}.
This would also give the exact limit of $h^{-4} \inf E_{\S_h}$ rather than the non-optimal bound \eqref{eq:infimum_energy_bound}, and will also answer question 2 above.
This general question seems, however, a pretty convoluted problem (even more than \cite{KS14}); a more approachable yet interesting partial result would be to prove this $\Gamma$-limit for non-Euclidean rods.
\end{enumerate}

\paragraph{Structure of this paper}
The paper is organized as follows: 
in Section~\ref{sec:local} we consider the "local" problem of dimension reduction of small balls. We first state the $\Gamma$-convergence result (Theorem~\ref{thm:Gamma_convergence}), and show that the scaling of $h^4$ is indeed the natural one (Section~\ref{sec:exp_energy}).
We then prove Theorem~\ref{thm:Gamma_convergence} and Theorem~\ref{thm:curvature_norm_asymptotic}.
In Section~\ref{sec:energy_scaling_general} we prove Theorem~\ref{thm:energy_scaling_general} through a sequence of lemmas; those in Section~\ref{sec:proofs_h_2} are more geometric and deal with the parts involving the $o(h^2)$ scaling;  those in Section~\ref{sec:proofs_h_4} are more analytic and deal with the $O(h^4)$ scaling.

\paragraph{Acknoledgments}
We thank Robert Jerrard for some useful advice and suggestions during the preparation of this paper, and Raz Kupferman for his critical reading of the manuscript. The second author was partially funded by the Israel Science Foundation (Grant No. 661/13), and by a grant from the Ministry of Science, Technology and Space, Israel and the Russian Foundation for Basic Research, the Russian Federation.

\section{$\Gamma$-limit of the elastic energy of shrinking balls}
\label{sec:local}

This section is concerned with the "local" problem -- the $\Gamma$-convergence of elastic energies of small balls around a point (Theorem~\ref{thm:Gamma_convergence}) and the limit of their infima (Theorem~\ref{thm:curvature_norm_asymptotic}). As mentioned in the introduction, we shall prove a slightly stronger version of Theorem~\ref{thm:curvature_norm_asymptotic} which allows for perturbations (Theorem~\ref{thm:curvature_norm_asymptotic_refined} below): Instead of considering the behavior of $E_{B_h(p)} $, we shall consider the behavior of $E_{B_h(p_h)} $ where $p_h$ is a sequence in $\M$ converging to $p$.
We begin by introducing some notations.
\begin{itemize}
\item Fix $h_0< \text{inj}(p)$, so that $\exp_p : B_{h_0}(0)\subset T_p\M \to B_{h_0}(p)$ is a diffeomorphism, where $B_{h_0}(0)$ is the ball of radius $h_0$ centered at the origin in $ T_p\M$, and $B_{h_0}(p)$ is the ball of radius $h_0$ around $p$ in $\M$.
	For some small enough neighborhood $U$ of $p$, $\exp_q: B_{h_0}(0)\subset T_q\M \to B_{h_0}(q)$ is also a diffeomorphism for every $q\in U$, and the map $(q,v)\mapsto \exp_q(v)$ is smooth. 
\item Fix a smooth orthonormal frame $F$ of $T\M|_U$. 
For every $q\in U$, we identify $T_q\M \sim \R^n$ using $F_q$; in particular, this identifies $B_{h}(0)\subset T_q\M$ with $B_h(0)\subset \R^n$.
Using this identification, $\exp_q$ defines normal coordinates on $B_{h_0}(q)$.
Note that the components $\Riem_{ijkl}(q)$ of the Riemann curvature tensor in this coordinate system (centered at $q$) are the components of the curvature tensor with respect to $F$ at $q$. In particular,
the map $q\mapsto \Riem_{ijkl}(q)$ is smooth. 
\item For $q\in U$, denote $\iota_q:= \exp_{q}^{-1} : B_{h_0}(q) \to T_{q}\M\sim \R^n$; this is the identity map in the above normal coordinates (centered at $q$). With a slight abuse of notation we will consider $\iota_q$ also with a restricted domain $B_h(q)$ for some $h<h_0$.
\item For a map $u:B_h(q)\to \R^n$, define the rescaled map $\tilde{u}:B\to \R^n$ by $\tilde{u}(x) := u\brk{hx}$, where $B:=B_1(0)\subset \R^n$, using normal coordinates. Note that we view $\tilde{u}$ as a map between Euclidean spaces.
\item Unless otherwise noted, all integral norms (e.g.~$L^2$, $W^{1,2}$) are normalized by the volume of the relevant domain.
\end{itemize}

\begin{theorem}
\label{thm:Gamma_convergence}
Let $p_h\in \M$ be a sequence converging to $p$. Then the following hold:
\begin{enumerate}
\item Compactness and lower semicontinuity:
	Assume that $u_h\in W^{1,2}(B_h(p_h);\R^n)$ satisfy $E_{B_h(p_h)}[u_h] = O(h^4)$. Then
	\begin{enumerate}
	\item Rigidity: There exists $Q_h\in \SO{n}$ and $c_h\in \R^n$ such that the maps $\bar{u}_h = Q_hu_h -c_h$ satisfy $\|\bar{u}_h - \iota_{p_h}\|_{W^{1,2}(B_h(p_h);\R^n)} = O(h^2)$.
	\item Compactness: The ``displacements" $v_h = \bar{u}_h - \iota_{p_h}$ converge (modulo a subsequence), after rescaling, to some $f\in W^{1,2}(B,\R^n)$, in the following sense:\footnote{Note that for different choices of $Q_h$ we can have that $v_h$ converge to different functions; however we can further require that $\int_{B} f = 0$, $\int_{B} \skew(df) = 0$. In this case there is no ambiguity.
	}
		\beq
		\label{eq:rescaled_convergence}
		\frac{1}{h^3}d\tilde{v}_h \weakly{} df \quad \text{weakly in $L^2$}.
		\eeq
	\item Lower semicontinuity: if $v_h\to f$ in the above sense, then
		\[
		\liminf \frac{1}{h^4} E_{B_h(p_h)}[u_h] \ge I_\Riem (f).
		\]
	\end{enumerate}
\item Recovery sequence: for every $f\in W^{1,2}(B,\R^n)$, there exists a sequence $u_h\in W^{1,2}(B_h(p_h);\R^n)$ such that $v_h = u_h - \iota_{p_h}$ converges strongly to $f$ (in the sense of \eqref{eq:rescaled_convergence}), and 
\[
\frac{1}{h^4} E_{B_h(p_h)}[u_h] \to I_\Riem (f).
\]
\end{enumerate}
\end{theorem}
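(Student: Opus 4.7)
The plan is to pull everything back to the unit ball $B$ using the normal-coordinate chart $\exp_{p_h}$ followed by dilation by $1/h$, turning the problem into an approximately-Euclidean elasticity problem with a slowly varying metric on a fixed domain. Introducing $w_h(x) := u_h(hx)/h$, the rescaled metric $\hat g_h(x) := g(hx)$ satisfies $\hat g_h^{1/2}(x) = I - \tfrac{h^2}{6}\Riem_{ikjl}(p_h) x^k x^l + O(h^3)$ in normal coordinates, and a change-of-variables computation gives
\[
h^{-4} E_{B_h(p_h)}[u_h] = \dashint_B \dist^2\bigl(dw_h \cdot \hat g_h^{-1/2},\,\SO{n}\bigr)\, d\mu_h + o(1),
\]
with $\mu_h$ the $\hat g_h$-volume on $B$, tending to Lebesgue. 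The smooth dependence of $\hat g_h$ and its expansion on the base point absorbs the perturbation $p_h\to p$.

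\textbf{Rigidity and compactness.} The first step is a quantitative geometric rigidity estimate of Friesecke--James--Müller type, applied to $dw_h\hat g_h^{-1/2}$ (or, equivalently, with the $h$-dependent target $\SO{n}\cdot\hat g_h^{1/2}$). Because $\hat g_h\to I$ smoothly, the constant is uniform in $h$ (and in $p_h\to p$), yielding $R_h\in\SO{n}$ with
\[
\dashint_B \bigl|dw_h - R_h\,\hat g_h^{1/2}\bigr|^2 \,\le\, C\, h^{-4} E_{B_h(p_h)}[u_h] = O(h^4).
\]
Replacing $u_h$ by $R_h^T u_h - c_h$, with $c_h$ absorbing the mean displacement and the skew rotation, gives part~1(a) via Poincaré--Korn on $B$. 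Define the rescaled strain $\psi_h := h^{-2}(dw_h - \hat g_h^{1/2})$, bounded in $L^2$. Since $h^{-3}d\tilde v_h = h^{-2}(dw_h - I) = \psi_h + h^{-2}(\hat g_h^{1/2}-I)$ and the second summand converges smoothly to $-\tfrac{1}{6}\Riem_{ikjl}(p)x^kx^l$, one extracts a weak limit $\psi_h\weakly{}\psi$ and obtains a limit for $h^{-3}d\tilde v_h$. That limit is exact because it is a weak-$L^2$ limit of exact one-forms on $B$, giving $df$ for some $f\in W^{1,2}(B;\R^n)$. This yields part~1(b), with $\psi = df + \tfrac{1}{6}\Riem_{ikjl}x^kx^l = df - \tfrac{1}{6}\Riem_{kijl}x^kx^l$ by antisymmetry of $\Riem$ in the first pair of indices, and with $\Riem_{kijl}x^kx^l$ already symmetric in $(i,j)$ by the first Bianchi identity.

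\textbf{Lower semicontinuity and recovery.} Multiplying out,
\[
dw_h\cdot\hat g_h^{-1/2} = I + h^2\psi_h + O(h^3),
\]
and $\dist^2(I+M,\SO{n}) = |\sym M|^2 + O(|M|^3)$ near the identity. Truncating to the good set $\{|\psi_h|\le h^{-1/2}\}$ controls the cubic remainder (its complement has vanishing $L^2$ mass by Chebyshev since $\psi_h$ is $L^2$-bounded), and weak lower semicontinuity of the $L^2$ norm of $\sym\psi_h$ gives
\[
\liminf_{h\to 0} h^{-4}E_{B_h(p_h)}[u_h] \,\ge\, \dashint_B |\sym\psi|^2 = I_\Riem(f),
\]
which is part~1(c). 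For part~2, for smooth $f$ take the recovery sequence $u_h(y):= y + h^3 f(y/h)$; then $w_h(x) = x + h^2 f(x)$ and $\psi_h = df - \tfrac{1}{6}\Riem_{kijl}(p_h) x^kx^l + O_{L^\infty}(h)$ converges strongly in $L^2$ to $\psi$, so the rescaled energy converges to $I_\Riem(f)$ by direct Taylor expansion. Extend to $f\in W^{1,2}(B;\R^n)$ by density and a diagonal argument.

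\textbf{Main obstacle.} The crux of the argument is the quantitative rigidity applied to $\hat g_h$ with constants uniform as $h\to 0$ and $p_h\to p$; this is a perturbation of classical Euclidean FJM and follows from the smooth closeness $\hat g_h^{1/2}=I+O(h^2)$, but the detail of ensuring uniformity requires care, or alternatively an appeal to the non-Euclidean rigidity developed in \cite{LP10, KS14}. The other technical subtlety is the $L^2$--rather than $L^\infty$--boundedness of $\psi_h$ in the lower semicontinuity step, which forces the truncation device; this is now a standard trick in derivations of nonlinear from linear elasticity.
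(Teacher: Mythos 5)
Your proposal is correct and follows essentially the same route as the paper's proof: normal coordinates centered at $p_h$, Euclidean Friesecke--James--M\"uller rigidity combined with the uniform $O(h^2)$ closeness of $\SO{\g,\euc}$ to $\SO{n}$, weak $L^2$ compactness of the rescaled strain with the limit identified as a gradient via the (weak) Poincar\'e lemma, a truncated Taylor expansion plus weak lower semicontinuity for the lower bound, and the ansatz $x+h^3 f(x/h)$ with an approximation/diagonal argument for the recovery sequence. Two bookkeeping remarks only: the factor $h^{-4}$ should not appear in your change-of-variables identity nor in the rigidity display (the normalized integrals there are themselves $O(h^4)$, so as written those lines are internally inconsistent), and FJM cannot be applied verbatim to the non-gradient field $dw_h\hat g_h^{-1/2}$ or to a spatially varying target --- one applies it to $dw_h$ with the fixed target $\SO{n}$ and absorbs the pointwise $O(h^2)$ metric correction, which is precisely the perturbation argument you acknowledge in your closing paragraph and the one carried out in the paper.
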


\subsection{The energy scaling of the exponential map}
\label{sec:exp_energy}
In this section we prove an upper bound of $\inf E_{B_h(p_h)}$, by using the exponential map. This yield the optimal scaling with $h$, though not the optimal constant.

\begin{lemma}[The asymptotic distortion of the exponential map]
\label{lm:asymptotic_exp}
For every $q\in U$, the inverse exponential map $\iota_{q}$ satisfies $E_{B_h(q)}[\iota_{q}] < Ch^4$ for some $C>0$ independent of $q$.
In particular, for the sequence $p_h\to p$ in Theorem~\ref{thm:Gamma_convergence}, $\inf E_{B_h(p_h)} = O(h^4)$.
\end{lemma}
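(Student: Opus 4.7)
The plan is to work in the normal coordinate chart $\iota_q : B_h(q) \to B_h(0) \subset \R^n$ centered at $q$, in which $\iota_q$ is literally the identity map $x \mapsto x$. Under the identification $T_q\M \simeq \R^n$ via $F_q$, the matrix of $d\iota_q$ in the coordinate basis $\{\partial_i\}$ at a point $x\in B_h(q)$ is therefore the identity $I$. What makes $d\iota_q$ fail to lie in $\SO{\g,\euc}$ at $x$ is only the fact that $\{\partial_i\}$ is not orthonormal with respect to $\g$ away from the origin.

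The key input is the standard expansion of the metric in normal coordinates,
\beq
\label{eq:normalcoords}
g_{ij}(x) = \delta_{ij} - \tfrac{1}{3} \Riem_{ikjl}(q)\, x^k x^l + O(|x|^3),
\eeq
with the $O(|x|^3)$ error term bounded uniformly for $q$ ranging over the precompact neighborhood $\bar U$ (by smoothness of $\g$ and of the exponential map). Writing $G(x) = [g_{ij}(x)]$, this gives $|G(x) - I| \le C|x|^2$ uniformly in $q\in \bar U$, hence $|G(x)^{1/2} - I| \le C'|x|^2$ as well, since $A\mapsto A^{1/2}$ is smooth near $I$.

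Next, to evaluate $\dist(d\iota_q, \SO{\g,\euc})$ at $x$ one must re-express $d\iota_q$ with respect to a $\g$-orthonormal basis on the source. The change-of-basis matrix is $G(x)^{-1/2}$, and in this orthonormal basis $d\iota_q$ is represented by $G(x)^{-1/2}$ itself. Since $\SO{\g,\euc}$ reduces to $\SO{n}$ in such a basis and the Frobenius distance of $G(x)^{-1/2}$ from $\SO{n}$ is at most $|G(x)^{-1/2} - I|$, we conclude
\[
\dist^2\!\bigl(d\iota_q(x), \SO{\g,\euc}\bigr) \le |G(x)^{-1/2} - I|^2 \le C''|x|^4
\]
for all $x \in B_h(q)$, uniformly in $q\in \bar U$.

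Finally, integrating and normalizing by $\Volg(B_h(q))$ — which itself equals $\omega_n h^n (1 + O(h^2))$ uniformly in $q$ — we obtain
\[
E_{B_h(q)}[\iota_q] = \dashint_{B_h(q)} \dist^2\!\bigl(d\iota_q,\SO{\g,\euc}\bigr)\, d\Volg \;\le\; C''' h^4,
\]
which is the desired bound. Applying this at $q = p_h$ yields $\inf E_{B_h(p_h)} \le E_{B_h(p_h)}[\iota_{p_h}] = O(h^4)$. The only mildly non-routine point is the bookkeeping in step three — that the relevant distance is controlled by $|G^{-1/2}-I|$ rather than $|G-I|$ directly — and the uniformity of constants in $q$, both of which follow from compactness of $\bar U$ and smoothness of $\g$.
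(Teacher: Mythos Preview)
Your proof is correct and follows essentially the same approach as the paper: work in normal coordinates, observe that $d\iota_q$ in a $\g$-orthonormal frame is represented by $G(x)^{-1/2}$, use the standard expansion of the metric to bound $|G(x)^{-1/2}-I|\le C|x|^2$ uniformly in $q$, and integrate. The paper carries the computation one step further to obtain the explicit $h^4$-coefficient in terms of $\Riem(q)$, but the logical skeleton is identical.
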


\begin{proof}
The energy density $\dist (du,\SO{\g,\euc})$ satisfies 
\beq
\label{eq:using_sqrt_g}
\dist(du,\SO{\g,\euc}) = \dist (du\circ A^{-1}, \SO{n})
\eeq
for every $A\in \SO{\g,\euc}$, where in the right-hand side, the distance is with respect to the Frobenius norm on $\R^n\otimes \R^n$.
In particular, for an orientation preserving map $u$ we have
\beq
\label{eq:using_sqrt_g2}
\dist(du,\SO{\g,\euc}) = \Abs{ \sqrt{(du A^{-1})^T du A^{-1}} - \id},
\eeq
where the transpose on the right-hand side is the "standard" (Euclidean) transpose (since $du\circ A^{-1}:\R^n \to \R^n$).
We denote by $\g_q(x)$ the matrix representation of the metric $\g$ at a point $x$ with respect to the normal coordinates centered at $q$, and denote by $\sqrt{\g_q}(x)$ the positive square root of this matrix.
It is well known that $\sqrt{\g_q}\in \SO{\g,\euc}$, where both sides are evaluated at $x$.
Applying \eqref{eq:using_sqrt_g2} with $A = \sqrt{\g_q}$ and $u=\iota_q$, and using the fact that $\iota_q$ is the identity map in normal coordinates, we have that
\[
\dist(d\iota_q,\SO{\g,\euc}) = \Abs{ \sqrt{\g_q}^{-1} - \id}.
\]

In normal coordinates, we further have
\beq
\label{eq:g_coor}
(\g_q)_{ij}(x) = \delta_{ij} + \frac{1}{3} \Riem_{kijl}(q)x^k x^l + O(|x|^3),
\eeq
and therefore
\beq
\label{eq:sqrt_g_inverse_coor}
(\sqrt{\g_q}^{-1})^{ij}(x) = \delta^{ij} - \frac{1}{6} \Riem_{kijl}(q)x^k x^l + O(|x|^3).
\eeq
where $\Riem_{kijl}(q)$ are the components of the Riemannian curvature tensor at $q$.
Note that our choice of coordinates implies that the remainders $O(|x|^3)$ (and similar remainders below) can bounded independently of $q\in U$, that is $O(|x|^3)< C|x|^3$ for some $C>0$ independent of $q$.
Therefore we obtain
\[
\dist^2 (d\iota_q,\SO{\g_q,\euc}) = \Abs{(\sqrt{\g_q}^{-1})^{ij} - \delta^{ij}}^2 = \frac{1}{36}\delta^{ia}\delta^{jb}\Riem_{kijl}(q)\Riem_{cabd}(q)x^k x^lx^c x^d + O(|x|^5).
\]

The volume form in coordinates reads
\beq
\label{eq:dVol_coordinates}
d\Volg = \sqrt{\det(\g_q)}\,dx = (1+ O(|x|^{2}))dx.
\eeq

Plugging those expressions into the functional, and noting that the domain $B_h(q)$ is in normal coordinates the Euclidean ball $B_h(0)$, we obtain that
\beq
\label{eq:elastic_energy_exp}
\begin{split}
E_{B_h(q)}[\iota_q] &= \dashint_{B_h(0)} \brk{\frac{1}{36}\delta^{ia}\delta^{jb}\Riem_{kijl}(q)\Riem_{cabd}(q) x^k x^lx^c x^d + O(|x|^5)}\,dx \\
	&=  \delta^{ia}\delta^{jb}\kappa^{klcd} \Riem_{kijl}(q)\Riem_{cabd}(q)\, h^4 + O(h^5),
\end{split}
\eeq
where
\[
\kappa^{klcd} := \frac{1}{36}\,\dashint_{B_1(0)} x^k x^lx^c x^d\,dx.
\]
This estimate completes the proof, since $\Riem_{kijl}(q)$ can be bounded uniformly in $q$.
\end{proof}

\begin{remark}
The map $\iota_{q}$ is not optimal --- a direct calculation shows that by perturbing it one can get a lower $h^4$-coefficient than in \eqref{eq:elastic_energy_exp}.
Specifically, this can be done using $u_h(x) = x + P(x)$, where $P$ is a vector of homogeneous polynomials of degree $3$.
\end{remark}

\subsection{Proof of Theorem~\ref{thm:Gamma_convergence} ($\Gamma$-convergence)}

In this section we prove Theorem~\ref{thm:Gamma_convergence}.
Throughout the proof, we will consider maps $A\in T_q^*\M\otimes \R^n$ for some $q\in B_h(p_h)$ (for example, $du_h(q)$ for $u_h\in W^{1,2}(B_h(p_h);\R^n)$).
As discussed before, $T_q^*\M\otimes \R^n$ has a natural inner-product induced by the metrics $\g$ and $\euc$, with respect to we can consider $|A|$, $\dist(A,\SO{\g,\euc})$, etc.

However, using the normal coordinates considered before, it would be useful to view $A$ also as a map $\R^n\to \R^n$, where both the domain and target are endowed with the Euclidean metric $\euc$.
Henceforth, whenever we say that we consider $A$ as a map $\R^n\to \R^n$, the norm we take is the Euclidean norm, and similarly we consider its distance (in $\R^n\otimes \R^n$) from $\SO{n}$.

By \eqref{eq:g_coor}, it follows that for every $A\in T_q^*\M\otimes \R^n$ and $q\in B_h(p_h)$ the metrics are equivalent with a uniform constant, that is
\beq
\label{eq:equivalence_of_norms}
\frac{|A|_{T_q^*\M\otimes \R^n}}{|A|_{\R^n\otimes \R^n}} = 1+ O(h^2).
\eeq
Therefore, in most cases it would not matter if we use $|A|_{T_q^*\M\otimes \R^n}$ or $|A|_{\R^n\otimes \R^n}$. In these cases, we simply write $|A|$. 
To simplify notation, we will also write $E_h$ instead of $E_{B_h(p_h)}$.

\subsubsection{Rigidity (part 1a)}

The proof of this part is a direct application of the Friesecke-James-M\"uller rigidity theorem \cite[Theorem~3.1]{FJM02b}, taking into account that our metric is not Euclidean, but not far from it on small balls.

Let $u_h\in W^{1,2}(B_h(p_h);\R^n)$.
In normal coordinates centered at $p_h$, we can consider $u_h$ as a map $B_h(0)\to \R^n$ between Euclidean spaces.
By the Friesecke-James-M\"uller rigidity theorem \cite[Theorem~3.1]{FJM02b}, there exist a constant $C>0$ (independent of $u_h$ and $h$), and matrices $Q_h\in \SO{n}$ such that 
\[
\dashint_{B_h(0)} |Q_h du_h - \id |^2 \,dx \le C\dashint_{B_h(0)} \dist^2(du_h,\SO{n}) \,dx.
\]
Where distances and volume form are with respect to the Euclidean metric (not with respect to $\g$), as discussed above.
By \eqref{eq:dVol_coordinates}, we have that integrating with respect to $dx$ or $d\Volg$ is the same up to a multiplicative constant independent of $h$.
By \eqref{eq:equivalence_of_norms}, the $T_q^*\M\otimes \R^n$ and $\R^n\otimes \R^n$ norms on  $Q_h du_h - \id$ are equivalent, with a constant independent of $h$.
Using these, and the fact that $\iota_{p_h}$ is the identity map in coordinates, we can write the above inequality as
\beq
\label{eq:rigidity_aux_1}
\dashint_{B_h(p_h)} |d(Q_h u_h) - d\iota_{p_h} |^2 \,d\Volg \le C\dashint_{B_h(0)} \dist^2(du_h(x),\SO{n}) \,d\Volg(x).
\eeq
Note that the right-hand side is similar to $E_h[u_h]$, but not the same -- $\dist^2(du_h(x),\SO{n})$ is the distance squared of the coordinate representation of $du_h$ to $\SO{n}$ (in $\R^n\times \R^n$), while the integrand of $E_h[u_h]$ is the distance of $du_h$ to $\SO{\g,\euc}$ in $T^*\M\otimes \R^n$.
In order to complete the proof, we need to show the right-hand side is bounded by $C(E_h[u_h] + h^4)$, where $C>0$ is independent of $h$, $u_h$.

This follows from the following pointwise calculation. Let $q\in B_h(p_h)$ and let $T\in T_q^*\M\otimes \R^n$. Let $\hat{T}\in \R^n\otimes \R^n$ be the matrix representation of $A$ in normal coordinates.
We claim
\[
\Abs{ \dist(T,\SO{\g,\euc}) - \dist(\hat{T},\SO{n}) } \le C|T| h^2
\]
where each distance is considered with respect to its natural inner-product. The constant $C>0$ is independent of $q$ and $h$.
Indeed, using \eqref{eq:using_sqrt_g} and the fact that $S \to \dist(S,\SO{n})$ is $1$-Lipschitz (for maps $\R^n\to \R^n$), we have 
\[
\begin{split}
\Abs{ \dist(T, \SO{\g,\euc}) - \dist(\hat{T},\SO{n}) } 
	&= \Abs{ \dist(T\circ \sqrt{\g}^{-1},\SO{n}) - \dist(\hat{T},\SO{n})} \\
	&\le \Abs{ \hat{T}}\Abs{ \sqrt{g}^{-1} - \id } \\
	&\le C\Abs{ T}h^2,
\end{split}
\]
where in the last line we used \eqref{eq:sqrt_g_inverse_coor} and \eqref{eq:equivalence_of_norms}, centered at the point $p_h$.
We therefore have
\[
\begin{split}
\dashint_{B_h(p_h)} \dist^2(du_h,\SO{n}) \,d\Volg 
	&\le \dashint_{B_h(p_h)} (\dist(du_h,\SO{\g,\euc})+C|du_h|h^2)^2 \,d\Volg \\
	&\le \dashint_{B_h(p_h)} C'(\dist(du_h,\SO{\g,\euc})+ h^2)^2 \,d\Volg \\
	&\le 2C'\brk{E_h[u_h] + h^4}.
\end{split}
\]
Together with \eqref{eq:rigidity_aux_1}, this shows that
\[
\dashint_{B_h(p_h)} |d(Q_h u_h) - d\iota |^2 \le C(E_h[u_h] + h^4),
\]
for some constant $C>0$. Part 1a of Theorem~\ref{thm:Gamma_convergence} now follows by Poincar\'e inequality.
 
\subsubsection{Compactness and lower bound (parts 1b and 1c)}
  
Suppose $E_h[u_h] = O(h^4)$ and let $\bar{u}_h = Q_hu_h - c_h$ as in part 1a of Thereom~\ref{thm:Gamma_convergence}, such that $\|v_h\|_{W^{1,2}(B_h(p_h);\R^n)} = O(h^2)$, where $v_h = \bar{u}_h - \iota$.
Let $\tilde{v}_h\in W^{1,2}(B; \R^n)$ be the rescaling of $v_h$, that is $\tilde{v}_h(x) := v_h\brk{hx}$.
Note that $\|d\tilde{v}_h\|_{L^2} = O(h^3)$ (recall that the norms are normalized by the volume of the domain, and that the Euclidean and Riemannian norms are uniformly equivalent by \eqref{eq:equivalence_of_norms} ).
Therefore we have that
\beq
\label{eq:eq_conv}
\frac{1}{h^3} d\tilde{v}_h \weakly{} V \quad \text{ in } L^2(B;T^*B\otimes \R^n).
\eeq
Note that $d(d\tilde{v}_h) = 0$ in $W^{-1,2}(B;\Lambda^2T^*B\otimes \R^n)$, by the weak Poincar\'e Lemma \cite[Theorem 6.17-4]{Cia13}, hence also $d(h^{-3} d\tilde{v}_h)=0$.
Since the weak convergence in $L^2(B;T^*B\otimes \R^n)$ respects the weak formulation of the $d$ operator, we obtain that $dV = 0$ in $W^{-1,2}$.
Invoking the Poincar\'e Lemma again, we obtain that $V=df$ for some $f\in W^{1,2}(B;\R^n)$.
This completes the proof of part 1b (compactness).

We now prove part 1c, the lower bound for the energy.
First, we write the energy density as
\beq
\label{eq:dist_expressed_with_G_h}
\dist(du_h, \SO{\g,\euc}) = \dist(du_h\sqrt{\g_{p_h}}^{-1}, \SO{n}) = \dist(\id + h^2 G_h,\SO{n}),
\eeq
where
\[
G_h\in L^2(B_h(p_h);\R^n\otimes \R^n), \qquad G_h := \frac{du_h\circ \sqrt{\g_{p_h}}^{-1} - \id}{h^2}.
\] 
Now, in coordinates we have (using $du_h = \id + dv_h$)
\beq
\label{eq:G_h_aux_1}
\frac{du_h\circ \sqrt{\g_{p_h}}^{-1} - \id}{h^2}
= \frac{( \sqrt{\g_{p_h}}^{-1} - \id)}{h^2}+\frac{dv_h}{h^2} + \frac{dv_h}{h^2}(\sqrt{\g_{p_h}}^{-1} - \id) 
\eeq
Since $\| \sqrt{\g_{p_h}}^{-1} - \id\|_\infty = O(h^2)$ and $\|v_h\|_{W^{1,2}(B_h(p_h);\R^n)} = O(h^2)$, we have $\|G_h\|_2 = O(1)$.
Let $\tilde{G}_h\in L^2(B;\R^n\otimes \R^n)$ be the rescaling of $G_h$, that is $\tilde{G}_h(x) = G_h(hx)$.
Since $\|G_h\|_2 = O(1)$ we also have $\|\tilde{G}_h\|_2 = O(1)$, hence $\tilde{G}_h$ weakly convergens in $L^2(B;\R^n\otimes\R^n)$ to some $G$.
From \eqref{eq:G_h_aux_1}, \eqref{eq:sqrt_g_inverse_coor} and \eqref{eq:rescaled_convergence}
a direct calculation shows that 
\beq
\label{eq:G_in_terms_of_f_and_R}
G(x) = df(x) - \frac{1}{6} \Riem_{kijl}(p)x^k x^l,
\eeq
using the continuity of $\Riem_{kijl}(p_h) \to \Riem_{kijl}(p)$.

Now, by Taylor expanding $\dist(\id+A,\SO{n})$, it follows from \eqref{eq:dist_expressed_with_G_h} that
\beq
\label{eq:Taylor_expend_G_h}
\Abs{\dist^2(du_h, \SO{\g,\euc}) -h^4\Abs{\frac{G_h+G_h^T}{2}}^2 } \le  \omega(h^2|G_h|).
\eeq
where $\omega(t)$ is a non-negative function satisfying $\lim_{t\to 0} \omega(t)/t^2 = 0$.
Therefore we have
\[
\begin{split}
\frac{1}{h^4}E_h(u_h) 
& \ge \dashint_{B_h(p_h)} \brk{\Abs{\frac{G_h+G_h^T}{2}}^2 - \frac{\omega(h^2|G_h|)}{h^4}}\,d\Volg \\
	&\ge \dashint_{B_h(p_h)} \chi_h \brk{\Abs{\frac{G_h+G_h^T}{2}}^2 - \frac{\omega(h^2|G_h|)}{h^4} } \,d\Volg\\ 
	&= \dashint_{B_h(p_h)} \brk{\chi_h\Abs{\frac{G_h+G_h^T}{2}}^2 - \chi_h |G_h|^2\frac{\omega(h^2|G_h|)}{h^4|G_h|^2}} \,d\Volg,
\end{split}
\]
where
\[
\chi_h (x) = 
\begin{cases}
1 & |G_h(x) |< h^{-1} \\
0 & |G_h(x) | \ge h^{-1}.
\end{cases}
\]
Now, on the support of $\chi_h$ we have $h^2|G_h|<h$, and therefore, since $\|G_h\|_2 = O(1)$, we have
\[
\dashint_{B_h(p_h)} \chi_h |G_h|^2\frac{\omega(h^2|G_h|)}{h^4|G_h|^2}\,d\Volg=\frac{1}{\Vol(B_h(p_h))}\int_{G_h <h^{-1}}  |G_h|^2\frac{\omega(h^2|G_h|)}{h^4|G_h|^2} \le \|G_h\|_2 \sup_{t \in (0,h)}\frac{\omega(t)}{t^2} \to 0.
\]
Therefore,
\beq
\label{eq:eq_tech9}
\begin{split}
\liminf \frac{1}{h^4}E_h(u_h)  
	&\,\,\ge \liminf \dashint_{B_h(p_h)} \chi_h\Abs{\frac{G_h+G_h^T}{2}}^2\, d\Volg \\
	&\,\, = \liminf \dashint_{B_h(p_h)} \Abs{\frac{\chi_h G_h+ \chi_h G_h^T}{2}}^2 \,d\Volg\\
	& \teq{\eqref{eq:dVol_coordinates}}  \liminf \dashint_{B_h(0)} \Abs{\frac{\chi_h G_h+ \chi_h G_h^T}{2}}^2 \,dx\\
	&\,\, = \liminf \dashint_{B} \Abs{\frac{\tilde{\chi}_h \tilde{G}_h+ \tilde{\chi}_h \tilde{G}_h^T}{2}}^2 \, dx.
\end{split}
\eeq
Since $\|G_h\|_2 = O(1)$, we have that $\tilde{\chi}_h \to 1$ in $L^2$ (and uniformly bounded), and therefore $\tilde{G}_h\weakly{} G$ implies that $\tilde{\chi}_h \tilde{G}_h\weakly{} G$.
 
By passing to subsequences, we can always assume that $\tilde{G}_h\weakly{} G$ for a subsequence that achieves $\liminf \frac{1}{h^4}E_h(u_h)$. Therefore, by the lower semicontinuity of the norm under weak convergence, \eqref{eq:eq_tech9} implies
\[
\liminf \frac{1}{h^4}E_h(u_h)  
	\ge \dashint_{B} \Abs{\frac{G+ G^T}{2}}^2\,dx \teq{\eqref{eq:G_in_terms_of_f_and_R}}  I_\Riem(f).
\]

\subsubsection{Upper bound (part 2)}

We now prove part 2 of Theorem~\ref{thm:Gamma_convergence} -- for every $f\in W^{1,2}(B,\R^n)$, there exists a sequence $u_h\in W^{1,2}(B_h(p_h);\R^n)$ such that $v_h = u_h - \iota_{p_{h}}$ converges strongly to $f$ (in the sense of \eqref{eq:rescaled_convergence}), and $h^{-4} E_h[u_h] \to I_\Riem [f]$.

Indeed, fix $f\in W^{1,2}(B,\R^n)$, and choose $f_h\in W^{1,2}(B;\R^n)$ such that $f_h\to f$ and $\|df_h\|_\infty < h^{-1}$. 
Define, in coordinates centered at $p_h$, $u_h(x) = x + h^3 f_h(x/h)$.
Then obviously $v_h = u_h - \iota_{p_{h}} = h^3f_h(x/h)$ converges to $f$, and
\[
G_h(x) := \frac{du_h\circ \sqrt{\g_{p_h}}^{-1} - \id}{h^2} = \frac{1}{h^2}\brk{h^2df_h(x/h)-\frac{1}{6} \Riem_{kijl}(p_h)x^k x^l} + O(h).
\]
Therefore
\beq
\label{eq:G_h_recovery_seq}
\tilde{G}_h\to df(x)  -\frac{1}{6} \Riem_{kijl}(p)x^k x^l \quad \text{ strongly in $L^2$}.
\eeq
Now, since $\|G_h\|_\infty = O(h^{-1})$, we have from \eqref{eq:dist_expressed_with_G_h} and \eqref{eq:Taylor_expend_G_h} that
\[
\Abs{\dist^2(du_h, \SO{\g,\euc}) - h^4\Abs{\frac{G_h+G_h^T}{2}}^2} \le \omega(h^2|G_h|) = h^4|G_h|^2 \frac{\omega(h^2|G_h|) }{h^4|G_h|^2} \le |G_h|^2o(h^4),
\]
hence
\[
\Abs{\frac{1}{h^4}E_h(u_h) - \dashint_{B_h(p_h)} \Abs{\frac{G_h+G_h^T}{2}}^2} \le o(1)\dashint_{B_h(p_h)} |G_h|^2 = o(1),
\]
and by \eqref{eq:G_h_recovery_seq}
we obtain that $h^{-4}E_h[u_h]\to I_\Riem[f]$.

\subsection{Proof of Theorem~\ref{thm:curvature_norm_asymptotic} (limit of infima)}

We shall now prove the slightly stronger version of Theorem~\ref{thm:curvature_norm_asymptotic}, namely:
 \begin{theorem}
\label{thm:curvature_norm_asymptotic_refined}
Let $p_h\in \M$ be a sequence converging to $p$. Then
\beq
\label{eq:thm:curvature_norm_asymptotic}
\lim_{h\to 0}  \frac{1}{h^4} \inf E_{B_h(p_h)} = |\Riem_p|^2,
\eeq
Where $|\Riem| := \sqrt{\min I_{\Riem}}$ is defined in normal coordinates centered at $p$.
\end{theorem}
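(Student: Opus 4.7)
The plan is to derive Theorem~\ref{thm:curvature_norm_asymptotic_refined} from Theorem~\ref{thm:Gamma_convergence} by the standard principle that $\Gamma$-convergence plus equi-coercivity implies convergence of infima. The two necessary ingredients are already in place: Lemma~\ref{lm:asymptotic_exp} gives $\inf E_{B_h(p_h)} = O(h^4)$, which guarantees that any almost-minimizing sequence satisfies the hypothesis of part~1 of Theorem~\ref{thm:Gamma_convergence}; and part~2 provides recovery sequences for arbitrary $f \in W^{1,2}(B;\R^n)$.

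\emph{Lower bound.} Choose $u_h \in W^{1,2}(B_h(p_h);\R^n)$ with $E_{B_h(p_h)}[u_h] \le \inf E_{B_h(p_h)} + h^5$. By Lemma~\ref{lm:asymptotic_exp}, $E_{B_h(p_h)}[u_h] = O(h^4)$, so parts~1a--1b of Theorem~\ref{thm:Gamma_convergence} produce rigid motions $Q_h \in \SO{n}$, $c_h \in \R^n$, and (along a subsequence) a limit $f \in W^{1,2}(B;\R^n)$ for the rescaled displacements. Part~1c then gives
\[
\liminf_{h\to 0} \frac{1}{h^4} \inf E_{B_h(p_h)} \;=\; \liminf_{h\to 0} \frac{1}{h^4} E_{B_h(p_h)}[u_h] \;\ge\; I_{\Riem_p}[f] \;\ge\; \inf I_{\Riem_p} \;=\; |\Riem_p|^2.
\]

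\emph{Upper bound.} I would first establish that the infimum of $I_{\Riem_p}$ is attained, so let $f^\ast \in W^{1,2}(B;\R^n)$ be a minimizer (see below). Part~2 of Theorem~\ref{thm:Gamma_convergence} then provides $u_h^\ast \in W^{1,2}(B_h(p_h);\R^n)$ with $h^{-4} E_{B_h(p_h)}[u_h^\ast] \to I_{\Riem_p}[f^\ast] = |\Riem_p|^2$, whence
\[
\limsup_{h\to 0} \frac{1}{h^4} \inf E_{B_h(p_h)} \;\le\; \limsup_{h\to 0} \frac{1}{h^4} E_{B_h(p_h)}[u_h^\ast] \;=\; |\Riem_p|^2.
\]
Combining the two bounds yields \eqref{eq:thm:curvature_norm_asymptotic}.

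\emph{Main obstacle.} The only genuinely non-routine step is the existence of a minimizer of $I_{\Riem_p}$. Observe that $I_{\Riem_p}$ is the squared $L^2$-distance between the fixed tensor field $\tfrac{1}{6}\Riem_{kijl} x^k x^l$ and the linear subspace $V := \{\sym df : f \in W^{1,2}(B;\R^n)\} \subset L^2(B;\sym(\R^n\otimes \R^n))$, and the issue is reduced to showing that $V$ is closed in $L^2$, after which the orthogonal projection supplies $f^\ast$. This closedness is a classical consequence of Korn's second inequality on the ball: each $u \in W^{1,2}(B;\R^n)$ admits an infinitesimal rigid motion $r_u$ with $\|u - r_u\|_{W^{1,2}} \le C \|\sym du\|_{L^2}$. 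Equivalently, one may run a direct method: normalize a minimizing sequence by $\int_B f_n = 0$ and $\int_B \skew df_n = 0$ to kill the rigid-motion degeneracy, use Korn to extract a $W^{1,2}$-bound, pass to a weakly convergent subsequence, and invoke the weak $L^2$-lower semicontinuity of the convex integrand of $I_{\Riem_p}$ to identify the limit as a minimizer.
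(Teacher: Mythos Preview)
Your proof is correct and follows the same standard route as the paper: convergence of infima via the $\Gamma$-convergence of Theorem~\ref{thm:Gamma_convergence} together with the a priori bound of Lemma~\ref{lm:asymptotic_exp} (the paper packages this argument as Lemma~\ref{lm:convergence_of_min}). The one point of divergence is what you flag as the ``main obstacle'', the existence of a minimizer of $I_{\Riem}$. You handle it separately via Korn's inequality, which is perfectly valid; the paper instead extracts it for free from the $\Gamma$-convergence itself: if $u_h$ is almost-minimizing and $f$ is the limit of the rescaled displacements, then for \emph{any} $f'\in W^{1,2}(B;\R^n)$ the recovery sequence gives
\[
I_\Riem[f] \;\le\; \liminf_{h\to 0} \frac{1}{h^4}\inf E_{B_h(p_h)} \;\le\; \limsup_{h\to 0} \frac{1}{h^4}\inf E_{B_h(p_h)} \;\le\; I_\Riem[f'],
\]
so $f$ is automatically a minimizer and the limit equals $I_\Riem[f]=\min I_\Riem$. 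This avoids invoking Korn altogether; your approach is slightly less economical but arguably more transparent about the variational structure of $I_\Riem$.
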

So far we have shown that $h^{-4}E_h$ $\Gamma$-converges to $I_\Riem$, including a compactness argument.
In particular, a standard argument shows convergence of minimizers:
\begin{lemma}
\label{lm:convergence_of_min}
Let $u_h$ be a sequence of approximate minimizers of $\frac{1}{h^4} E_{B_h(p_h)}$, that is 
\[
 \frac{1}{h^4} E_{B_h(p_h)}[u_h] = \inf_{W^{1,2}(B_h(p_h);\R^n)} \frac{1}{h^4} E_{B_h(p_h)}+o(1).
\]
Then the associated displacements $v_h$ defined in Theorem~\ref{thm:Gamma_convergence} converge (modulo a subsequence) to a minimizer of $I_R$.
In particular,
\beq
\label{eq:conv_minimum}
\lim_{h\to 0} \inf_{W^{1,2}(B_h(p_h);\R^n)} \frac{1}{h^4} E_{B_h(p_h)} = \min_{W^{1,2}(B;\R^n)} I_\Riem.
\eeq
\end{lemma}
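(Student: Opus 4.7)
The plan is to deduce the lemma as a standard consequence of the $\Gamma$-convergence statement in Theorem~\ref{thm:Gamma_convergence} combined with the exponential-map upper bound in Lemma~\ref{lm:asymptotic_exp}. The argument has the usual two sides: an upper bound on $\limsup h^{-4}\inf E_{B_h(p_h)}$ coming from the recovery sequence, and a lower bound on $\liminf h^{-4}E_{B_h(p_h)}[u_h]$ coming from compactness plus lower semicontinuity, with a preliminary step ensuring that $I_\Riem$ actually has a minimizer so that ``$\min I_\Riem$'' in \eqref{eq:conv_minimum} makes sense.

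First I would record that $I_\Riem$ attains its infimum on $W^{1,2}(B;\R^n)$. The functional is a pure-traction linear elasticity functional of the form $\dashint_B |\sym df - M(x)|^2$ with $M$ a fixed smooth (actually polynomial) symmetric tensor; by Korn's inequality the symmetric gradient controls $df$ modulo infinitesimal rigid motions, so working on the quotient (equivalently, on the slice $\int_B f = 0$, $\int_B \skew(df) = 0$ mentioned in Theorem~\ref{thm:Gamma_convergence}) the functional is coercive, and since it is convex and continuous in $df$, the direct method yields a minimizer $f^\star$. Call $m := I_\Riem[f^\star] = \min_{W^{1,2}(B;\R^n)} I_\Riem$.

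Next I would establish the upper bound $\limsup h^{-4}\inf E_{B_h(p_h)} \le m$. Apply part~2 of Theorem~\ref{thm:Gamma_convergence} to $f^\star$ to obtain a recovery sequence $u_h^\star \in W^{1,2}(B_h(p_h);\R^n)$ with $h^{-4}E_{B_h(p_h)}[u_h^\star] \to I_\Riem[f^\star] = m$. Taking the infimum gives $\limsup h^{-4}\inf E_{B_h(p_h)} \le m$. In particular, combined with the approximate-minimality hypothesis, the approximate minimizers $u_h$ satisfy $h^{-4}E_{B_h(p_h)}[u_h] = h^{-4}\inf E_{B_h(p_h)} + o(1) \le m + o(1)$, so $E_{B_h(p_h)}[u_h] = O(h^4)$.

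For the lower bound, I would now invoke the rigidity/compactness step (part~1a--1b of Theorem~\ref{thm:Gamma_convergence}): since $E_{B_h(p_h)}[u_h] = O(h^4)$, after replacing $u_h$ by $Q_h u_h - c_h$ the rescaled displacements $\tilde v_h$ have $h^{-3} d\tilde v_h \rightharpoonup df$ in $L^2$ along a subsequence, for some $f \in W^{1,2}(B;\R^n)$. Replacing $u_h$ by a rigid motion of itself does not change the energy (the energy density is invariant under post-composition with $\SO{n}$ by definition of $\SO{\g,\euc}$), so part~1c gives $\liminf h^{-4}E_{B_h(p_h)}[u_h] \ge I_\Riem[f] \ge m$. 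Together with the upper bound this forces $\lim h^{-4}\inf E_{B_h(p_h)} = m$ and simultaneously $I_\Riem[f] = m$, i.e.\ the limit $f$ is a minimizer. The only mild subtlety—and the step I would spend most care on—is the first one, namely producing a genuine minimizer of $I_\Riem$: one must quotient out the infinitesimal rigid motions (and additive constants) and apply Korn to get coercivity before invoking the direct method, since the functional only sees $\sym df$; everything else is a routine unwinding of $\Gamma$-convergence.
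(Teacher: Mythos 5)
Your proof is correct, and its skeleton (upper bound via a recovery sequence, lower bound via rigidity/compactness plus lower semicontinuity, then squeezing) is the standard $\Gamma$-convergence unwinding that the paper also uses; but you arrange two inputs differently. First, you establish the existence of a minimizer of $I_\Riem$ up front by Korn's inequality and the direct method on the slice $\int_B f=0$, $\int_B\skew(df)=0$, and then feed that minimizer into part~2 of Theorem~\ref{thm:Gamma_convergence}; the paper never invokes Korn at this stage, and instead obtains existence as a byproduct: it compares the compactness limit $f$ of the displacements against recovery sequences of \emph{arbitrary} $f'\in W^{1,2}(B;\R^n)$, getting $I_\Riem[f]\le I_\Riem[f']$, so $f$ itself is a minimizer and setting $f'=f$ yields \eqref{eq:conv_minimum}. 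Second, you derive the a priori bound $E_{B_h(p_h)}[u_h]=O(h^4)$ from the recovery sequence of your minimizer, whereas the paper gets it directly from the exponential-map estimate of Lemma~\ref{lm:asymptotic_exp}. Both routes are sound: yours is self-contained relative to the $\Gamma$-limit functional (it does not need Lemma~\ref{lm:asymptotic_exp}, and it makes the meaning of ``$\min I_\Riem$'' explicit before it is used), while the paper's comparison argument avoids Korn's inequality entirely and shows that existence of the minimizer costs nothing extra once compactness and the recovery sequences are in hand. One small point of care common to both: the limits of $h^{-4}\inf E_{B_h(p_h)}$ should formally be handled by passing to a subsequence realizing the $\liminf$ before extracting the compactness subsequence, as you implicitly do.
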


\begin{proof}
By Lemma~\ref{lm:asymptotic_exp}, $\inf \frac{1}{h^4} E_{B_h(p_h)} = O(1)$, hence $E_{B_h(p_h)}[u_h] = O(h^4)$.
Therefore, by Theorem~\ref{thm:Gamma_convergence}, parts 1(b) and 1(c), $v_h$ converges to $f\in W^{1,2}(B;\R^n)$.
Choose an arbitrary $f' \in W^{1,2}(B;\R^n)$, and let $u_h'$ be a recovery sequence for $f'$ according to part 2 of Theorem~\ref{thm:Gamma_convergence}.
We therefore have
\[
I_\Riem[f] \le \lim_{h\to 0} \frac{1}{h^4} E_{B_h(p_h)}[u_h] = \lim_{h\to 0} \inf_{W^{1,2}(B_h(p_h);\R^n)} \frac{1}{h^4} E_{B_h(p_h)} \le \lim_{h\to 0} \frac{1}{h^4} E_{B_h(p_h)}[u'_h] = I_\Riem[f'], 
\] 
hence $f$ is a minimizer. By choosing $f'=f$ in the above equation we obtain \eqref{eq:conv_minimum}.
\end{proof}

Therefore, in order to complete the proof of both Theorem~\ref{thm:curvature_norm_asymptotic} and Theorem~\ref{thm:curvature_norm_asymptotic_refined} we need to show that $N(\Riem) := \sqrt{\min I_{\Riem}}$ is a norm on $\Riem$.
Since $I_{\alpha\Riem}(\alpha f) = \alpha^2 I_{\Riem}(f)$ for every $\alpha\in \R$, and since we minimize over a vector space, we have
\[
N(\alpha\Riem) = |\alpha| N(\Riem).
\]
Note also that if $f_a$ is a minimizer of $I_{\Riem^a}$ for $a=1,2$, then
\[
\brk{\int_B \Abs{\sym (df_1 +df_2) - \frac{1}{6} (\Riem^1_{kijl}+ \Riem^2_{kijl})x^k x^l }^2}^{1/2} \le 
\sum_{a=1}^2 \brk{\int_B \Abs{\sym (df_a) - \frac{1}{6} \Riem^a_{kijl}x^k x^l }^2}^{1/2},
\]
hence
\[
N(\Riem^1 + \Riem^2) \le N(\Riem^1) + N(\Riem^2).
\]
Therefore $N$ is a semi-norm. 
A similar calculation shows that $2I_{\Riem^1}[f_1] + 2I_{\Riem^2}[f_2] = I_{\Riem^1+\Riem^2}[f_1+f_2] + I_{\Riem^1-\Riem^2}[f_1-f_2]$. This implies $f_1\pm f_2$ is a minimizer of $I_{\Riem^1\pm\Riem^2}$, so $N$ satisfies the parallelogram law.
Indeed, let $f_\pm$ be a minimizer of $I_{\Riem^1\pm \Riem^2}$, then
\[
\begin{split}
2I_{\Riem^1}[f_1] + 2I_{\Riem^2}[f_2] &= I_{\Riem^1+\Riem^2}[f_1+f_2] + I_{\Riem^1-\Riem^2}[f_1-f_2] \\
	&\ge I_{\Riem^1+\Riem^2}[f_+] + I_{\Riem^1-\Riem^2}[f_-] \\
	&= 2I_{\Riem^1}\Brk{\frac{f_++f_-}{2}} + 2I_{\Riem^2}\Brk{\frac{f_+-f_-}{2}} \\
	&\ge 2I_{\Riem^1}[f_1] + 2I_{\Riem^2}[f_2].
\end{split}
\]
Therefore, in order to complete the proof we need to show the positivity of $N$.

Denote $e_{ij} := \frac{1}{6} \Riem_{kijl}x^k x^l$. Since the minimizer of $I_\Riem$ exists, $N(\Riem) = 0$ if and only if there exists a function $f\in W^{1,2}(B;\R^n)$ such that $(\sym df)_{ij} = e_{ij}$.
The Saint-Venant lemma \cite[Section~6.18]{Cia13} implies that there exists such function if and only if
\[
\pl_{lj}e_{ik} + \pl_{ki}e_{jl} - \pl_{li}e_{jk} - \pl_{kj}e_{il} = 0.
\]
Note that
\[
\pl_{lj}e_{ik} = \frac{1}{6}\Riem_{aikb}\pl_{lj}(x^a x^b) = \frac{1}{6}\Riem_{aikb}(\delta_{al}\delta_{bj} + \delta_{aj}\delta_{bl}) = \frac{1}{6}(\Riem_{likj}+\Riem_{jikl}),
\]
hence (using the symmetries of the curvature tensor) we have
\[
\begin{split}
 6(\pl_{lj}e_{ik} + \pl_{ki}e_{jl} - \pl_{li}e_{jk} - \pl_{kj}e_{il}) 
	&= {\color{red}\Riem_{likj}}+{\color{blue}\Riem_{jikl}} + {\color{red}\Riem_{kjli}}+{\color{blue}\Riem_{ijlk}} - {\color{green}\Riem_{ljki}} -\Riem_{ijkl} - {\color{green}\Riem_{kilj}} -\Riem_{jilk} \\
	&=2({\color{red}\Riem_{likj}} - {\color{green}\Riem_{ljki}} + {\color{blue}\Riem_{jikl}} - \Riem_{ijkl}) \\
	&=2({\color{red}\Riem_{likj}} + {\color{green}\Riem_{ljik}} + 2{\color{blue}\Riem_{jikl}}) \\
	&=2(-\Riem_{lkji} + 2{\color{blue}\Riem_{jikl}}) \\
	&=6\Riem_{jikl}.
\end{split} 
\]
Therefore, the minimum energy is zero if and only if $\Riem=0$.
It follows that $N(\cdot)$ is a norm on the space of Riemannian curvatures at $p$.

\section{Energy scaling for general thin elastic bodies}
\label{sec:energy_scaling_general}

In this section we prove Theorem~\ref{thm:energy_scaling_general}. 
We begin by introducing some notations and by describing the main result of \cite{KS14}.
\begin{itemize}
\item Recall that  $T\M|_\S = T\S \oplus \NS$, and denote by $\Ppar_{\S}: T\M|_\S\to T\S$ and $\Pperp_{\S}: T\M|_\S \to \NS$ the orthogonal projections. The corresponding projections of other submanifolds are defined similarly.
\item We denote by $\pi_h:\S_h\to \S$, the natural projection $\pi_h(\exp_p(v)) := p$ (see \eqref{eq:def_S_h}).
\item We denote by $\nabla^\M$ the Levi-Civita connection on the tangent bundle of $\M$, and similarly for other manifolds.
	We denote by $\nabla^E$ the connection induced by the relevant Levi-Civita connection on a vector bundle $E$. For example, $\nabla^{\NS}$ is the connection of $\NS$ induced by $\nabla^\M$.
	We write $\nabla$ when the connection is clear from the context.
\item The second fundamental form (shape operator) of $\S$ in $\M$ is defined by
	\beq
	\label{eq:def_II}
	\II_{\S,\M}:\TS \times \NS \to \TS \, \, , \, \,   \II_{\S,\M} (v,\eta):=-\Ppar_{\S} (\nabla^{\M}_vN),
	\eeq
	where $N$ is a local extension of $\eta$ in the normal bundle $\NS$.
	The second fundamental form of other submanifolds is defined similarly.
\end{itemize}
The main result of \cite{KS14} is that the rescaled energies $h^{-2}E_{\S_h}$ $\Gamma$-converge (including a compactness statement), under an appropriate notion of $W^{1,2}$ convergence, to the limit energy
\[
E_\S : W^{2,2}(\S;\R^n) \times W^{1,2}(\S;\NS^*\otimes \R^n) \to [0,\infty]
\]
defined by
\[
E_\S(F,\qperp) := 
	\begin{cases}
	C\,\dashint_\S \brk{2|\Ppar_{\S}\circ q^{-1}  \circ \nabla q^\perp + \II_{\S,\M}|^2 + |\Pperp_{\S}\circ q^{-1} \circ \nabla q^\perp|^2 } \, d\VolgS & q\in \SO{\g,\euc} \text{  a.e.} \\
	\infty &\text{otherwise,}
	\end{cases}
\]
where $q := dF\oplus \qperp$, and $C$ is some constant depending on the codimension of $\S$ in $\M$. Note that \cite{KS14} and \eqref{eq:def_II} uses different sign conventions for $\II_{\S,\M}$, which results in a sign difference in the definition of $E_\S$.

\emph{Proof (of Theorem~\ref{thm:energy_scaling_general})}:
It follows immediately from the main result of \cite{KS14} described above that $\inf E_{\S_h} = O(h^2)$ if and only if $E_\S$ is not identically infinity, which implies that there exists $F\in W^{2,2}(\S;\R^n)$ and $\qperp\in W^{1,2}(\S;\NS^*\otimes \R^n)$ such that $dF\oplus \qperp\in \SO{\g,\euc}$ a.e.
This proves part 1 of Theorem~\ref{thm:energy_scaling_general}.

Furthermore, $\min E_\S = 0$ if and only if $\inf E_{\S_h} = o(h^2)$.
Note that the conditions for $E_\S(F,\qperp)$ to vanish, that is $-\Ppar_{\S}\circ q^{-1}  \circ \nabla q^\perp = \II_{\S,\M}$ and $\Pperp_{\S}\circ q^{-1} \circ \nabla \qperp = 0$, are equivalent to the condition $\nabla \qperp = -dF\circ \II_{\S,\M}$.

We split the analysis of the case $\inf E_{\S_h} = o(h^2)$, that is, of $\min E_\S = 0$, into several steps, details in lemmas bellows.
First, we prove in Lemma~\ref{lm:min_E_S_smooth} that if $\min E_\S = 0$, then the minimizer is smooth, which is used throughout the rest of the proof.
Next, in Lemma~\ref{lem:second_form} we show that the condition $-\Ppar_{\S}\circ q^{-1}  \circ \nabla q^\perp = \II_{\S,\M}$ implies that the second form $\II_{F(\S),\R^n}$ coincides with $\II_{\S,\M}$ under appropriate identifications that are detailed in the lemma. 

We then show, in Lemma~\ref{lem:normal_connection}, that the condition $\Pperp_{\S}\circ q^{-1} \circ \nabla \qperp = 0$ implies that the normal connection of $F(\S)$ in $\R^n$ coincides with that of $\S$ in $\M$ (again, under appropriate identifications).
Together with the identification of the second forms $\II_{F(\S),\R^n}$ and $\II_{\S,\M}$ (Lemma~\ref{lem:second_form}), this implies also that the covariant derivatives of the second fundamental forms coincide (Lemma~\ref{lem:everything_pullbacks}). 
Using this and the Gauss-Codazzi-Ricci equations, we conclude in Proposition~\ref{prop:curvature_condition_equivalence} that $\min E_\S = 0$ implies $\Riem^\M(X,Y) = 0$ for every $X,Y\in T\S$, and that for simply connected manifolds the converse also holds.
This completes the proof of part 3 of Theorem~\ref{thm:energy_scaling_general}.

The smoothness of the minimizer $(F,\qperp)$ of $E_\S$ in our case immediately shows that its recovery sequence $u_h\in W^{1,2}(\S_h;\R^n)$, as described in \cite[Section~6]{KS14}, satisfies $E_{\S_h}(u_h) < Ch^4$, which proves \eqref{eq:energy_O_h_fourth}. This is the content of Lemma~\ref{lm:o_h_2_O_h_4}.
This completes the proof of part 2 of Theorem~\ref{thm:energy_scaling_general}.

Finally, in Lemma~\ref{lm:o_h_4} we prove the bound \eqref{eq:infimum_energy_bound}.
The rest of part 4 of Theorem~\ref{thm:energy_scaling_general} immediately follows from that bound.
Indeed, assume that \eqref{eq:energy_o_h_fourth} holds, and $\Riem^\M$ is parallel along a foliation of curves emanating from $\S$.
Because of \eqref{eq:infimum_energy_bound}, assumption \eqref{eq:energy_o_h_fourth} implies that $\Riem^\M|_\S \equiv 0$ and the parallelism of $\Riem$ then implies that $\Riem^\M|_{\S_h} \equiv 0$. 
If $\S$ is simply-connected, then also $\S_h$, since they are homotopy equivalent for small enough $h$.  
A simply-connected $n$-dimensional manifold with zero curvature can be isometrically immersed in $\R^n$ \cite[Theorem~1.6-1]{Cia05}. Thus, $\min E_{\S_h} = 0$ (since we do not impose any boundary conditions or external forces).

\subsection{Proofs regarding the scaling $\inf E_{\S_h} = o(h^2)$}

\label{sec:proofs_h_2}

In this section we prove our results concerning the scaling $\inf E_{\S_h} = o(h^2)$. 
These include most of part 2 and part 3 of Theorem~\ref{thm:energy_scaling_general}.
\eqref{eq:energy_O_h_fourth} in part 2, and part 4 of the theorem are proved in Section~\ref{sec:proofs_h_4}. 

\begin{lemma}
\label{lm:min_E_S_smooth}
If $\min E_\S = 0$, then the minimizer $(F,q^\perp)$ is smooth and unique up to rigid motions.  
\end{lemma}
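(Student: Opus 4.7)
The plan is to recognize that $\min E_\S = 0$ turns $q := dF \oplus q^\perp$ into a solution of a first-order moving-frame equation $q^{-1}\nabla q = \omega$, where $\omega$ is a smooth $\so(n)$-valued $1$-form on $\S$ determined by the ambient data $(\M,\g,\S)$ alone. Smoothness (by bootstrap) and uniqueness up to rigid motions (by ODE uniqueness) then both follow. To identify $\omega$, note that the two conditions equivalent to $E_\S(F,q^\perp)=0$ are the a.e.\ orthogonality $q\in\SO{\g,\euc}$ together with $\nabla q^\perp=-dF\circ\II_{\S,\M}$. Differentiating $q^T q=\id$ a.e.\ gives that $q^{-1}\nabla q$ is skew. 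Its tangent-to-normal block is prescribed by $-\II_{\S,\M}$ via the hypothesis, and skew-symmetry then determines the normal-to-tangent block. The tangent-tangent block is skew, and since $dF$ is an isometric immersion the Koszul formula forces it to equal the pullback under $q$ of the Levi--Civita connection form of $(\S,\g|_\S)$; the normal-normal block is similarly identified with the pullback of the normal connection of $\NS$ in $\M$. Since $\g$ is smooth and $\II_{\S,\M}$ is smooth, $\omega$ is smooth.

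A short bootstrap then upgrades the regularity of $q$. In local coordinates on $\S$, the identity $\nabla q = q\,\omega$ reads $\partial_i q = q\,\omega_i$ (modulo smooth terms absorbed into $\omega$), with smooth $\omega_i$. Since $q \in L^\infty \cap W^{1,2}$ (orthogonal matrices are bounded), the right-hand side lies in $L^\infty$, hence $q \in W^{1,\infty}$; iterating this argument yields $q \in C^\infty$, and $F$ is then smooth as a local antiderivative of the smooth $dF$.

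For uniqueness, suppose $(F_1,q_1^\perp)$ and $(F_2,q_2^\perp)$ are two smooth minimizers, giving frames $q_1,q_2$ that both solve $\nabla q=q\,\omega$ with the same $\omega$. Fix $p_0 \in \S$ and set $R := q_2(p_0)\,q_1(p_0)^{-1} \in \SO{n}$. Then $R q_1$ and $q_2$ agree at $p_0$ and satisfy the same first-order system, so ODE uniqueness along paths (using path-connectedness of $\S$) yields $q_2 \equiv R q_1$. The tangential block gives $dF_2 = R\,dF_1$, so $F_2 - R F_1$ is a constant $c \in \R^n$, and the normal block gives $q_2^\perp = R\, q_1^\perp$; thus $(F_2, q_2^\perp)$ is obtained from $(F_1, q_1^\perp)$ by the rigid motion $x \mapsto Rx + c$.

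The main obstacle I anticipate is the first step: justifying the identification of the diagonal blocks of $q^{-1}\nabla q$ with the intrinsic Levi--Civita and normal connections before we have any smoothness of $(F,q^\perp)$. The cleanest route is to work with mollifications of $F$ and $q^\perp$, exploit the a.e.\ identities $q^T q=\id$ and $\nabla q^\perp=-dF\circ\II_{\S,\M}$ together with the Koszul characterization of the Levi--Civita connection as the unique torsion-free metric connection, and then pass to the limit; once $\omega$ is identified, the bootstrap and uniqueness conclusions are essentially standard.
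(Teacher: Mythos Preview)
Your approach is essentially the paper's, recast in moving-frame language: both derive a first-order linear system for $(dF,q^\perp)$ with smooth coefficients determined by the ambient geometry (the paper writes this explicitly in coordinates as $\partial_i\partial_j F=\Gamma_{ij}^l\partial_l F-\Gamma_{ib}^l\g_{lj}\g^{ab}q^\perp_a$ and $\partial_i q^\perp_a=\Gamma_{ia}^j\partial_j F+\Gamma_{ia}^b q^\perp_b$), then bootstrap regularity and deduce uniqueness from the system.

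Your anticipated obstacle is not real. The paper carries out the Koszul identification of the tangent--tangent block directly at the $W^{2,2}$ level, noting that the computation uses only the Leibniz rule and $\partial_i\partial_j F=\partial_j\partial_i F$, both valid in that regularity; no mollification is needed. The normal--normal block requires no Koszul-type argument at all: it is read off directly from the hypothesis $\nabla q^\perp=-dF\circ\II_{\S,\M}$, whose right-hand side lies in $dF(T\S)$, so $P^\perp_\S\circ q^{-1}\circ\nabla q^\perp=0$ is immediate. For uniqueness the paper invokes the fundamental theorem of submanifolds (Tenenblat) rather than your direct ODE argument along paths; both are valid, and yours is a bit more self-contained.
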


\begin{proof}
We use the following notations: the indices $i,j,...$ are in the range $1..k$, the indices $a,b,...$ are in $k+1,...,n$, and the indices $I,J,...$ are in $1..n$.

Choose local coordinates $x^i$ on $\S$ and a frame $v_a$ for $\NS$. 
We extend the coordinate system to  a tubular neighborhood by choosing 
$x^a$, such that $\pl_a|_\S = v_a$.
Therefore, $\g_{ia}=0$ along $\S$. 
In these coordinates write $\qperp_a = \qperp(\pl_a)$.
Let $\Gamma_{IJ}^K$ be the Christoffel symbols of $(\M,\g)$ along $\S$. They are smooth functions of $x^i$.

Let $F\in W^{2,2}_{\text{iso}}(\S;\R^n)$ and $\qperp\in W^{1,2}(\S;N^*\S\otimes \R^n)$ satisfy $E_\S(F,\qperp) =0$. This implies the following
\begin{enumerate}
\item 
	\beq
	\label{eq:geometric_cond0}
	dF\oplus \qperp \in \SO{\g,\euc}\,\, \text{almost everywhere}.
	\eeq
\item For every $X\in T\S$ and $\eta\in \NS$,
		\beq
		\label{eq:geometric_cond1}
		-P_{\S}^{\parallel} \circ q^{-1} \big((\nabla_{X}^{\NS^* \otimes \R^n} q^{\perp})(\eta) \big) = \II_{\S,\M} (X,\eta).
		\eeq
	\item For every $X\in T\S$ and $\eta\in \NS$,
	\beq
	\label{eq:geometric_cond2}
	P^\perp_{\S} \circ q^{-1} \big((\nabla_{X}^{\NS^* \otimes \R^n} \qperp)(\eta)\big) = 0.
	\eeq
\end{enumerate}
Condition 1 implies that $\pl_iF\cdot \pl_j F = \g_{ij}$, $\pl_iF\cdot \qperp_a =0$ and $\qperp_a\cdot\qperp_b = \g_{ab}$, where $\cdot$ stands for the standard inner-product in $\R^n$.

Since $\{\pl_k F\}\cup\{\qperp_a\}$ is a basis to $\R^n$, we can write
\[
\pl_i\pl_j F = A_{ij}^l\pl_l F + A_{ij}^a \qperp_a
\]
for some functions $A_{ij}^l$, $A_{ij}^a$.

We now show that $A_{ij}^l = \Gamma_{ij}^l$ by repeating the calculation of the expression for the Christoffel symbols of the Levi-Civita connection on $\S$. Note that all the arguments below are valid in this Sobolev regularity, as they rely only on the validity of the product rule and on $\pl_i\pl_j=\pl_j\pl_i$, both of them hold in this regularity.
\[
\begin{split}
\pl_i\pl_j F \cdot \pl_l F &= \pl_i \g_{jl} - \pl_j F \cdot \pl_i\pl_l F \\
	&=\pl_i \g_{jl} - \pl_l\g_{ij} + \pl_l\pl_j F \cdot \pl_i F \\
	&=\pl_i \g_{jl} - \pl_l\g_{ij} + \pl_j\g_{li} - \pl_l F \cdot \pl_j\pl_i F,
\end{split}
\]
and therefore
\[
A_{ij}^m \g_{ml} = \pl_i\pl_j F \cdot \pl_l F = \frac{1}{2}\brk{\pl_i \g_{jl} - \pl_l\g_{ij} + \pl_j\g_{li}} = \Gamma_{ij}^m \g_{ml}.
\]
Up to now we have
\beq
\label{eq:F_ij_in_natural_frame_1}
\pl_i\pl_j F = \Gamma_{ij}^l\pl_l F + A_{ij}^a \qperp_a.
\eeq
Next, we consider conditions 2 and 3. By definition,
\beq
\label{eq:q_ai_in_natural_frame_1}
\begin{split}
(\nabla_{\pl_i}^{\NS^* \otimes \R^n} \qperp)(\pl_a) 
	&= \pl_i (\qperp (\pl_a)) - \qperp(\nabla_{\pl_i}^{\NS}\pl_a)\\
	&= \pl_i \qperp_a - \qperp(\Gamma_{ia}^b\pl_b)\\
	&= \pl_i \qperp_a - \Gamma_{ia}^b\qperp_b,
\end{split}
\eeq
where in the second line we used 
\[
\nabla_{\pl_i}^{\NS}\pl_a = P^\perp(\nabla_{\pl_i}^{\M}\pl_a) = P^\perp(\Gamma_{ia}^j\pl_j + \Gamma_{ia}^b\pl_b) = \Gamma_{ia}^b\pl_b.
\]
By  \eqref{eq:geometric_cond1} and \eqref{eq:q_ai_in_natural_frame_1} together with the identity
$
P_{\S}^{\parallel} \circ q^{-1}=q^{-1} \circ P_{F(\S)}^{\parallel},
$
we get
\beq
\label{eq:q_ai_in_natural_frame_2}
\begin{split}
P_{F(\S)}^\parallel (\pl_i \qperp_a) 
	&= -dF\circ \II_{\S,\M} (\pl_i,\pl_a) := dF \circ P_{\S}^{\parallel} (\nabla^{\M}_{\pl_i} \pl_a)\\
	&= dF\circ P_{\S}^{\parallel} (\Gamma_{ia}^j\pl_j + \Gamma_{ia}^b\pl_b)\\
	&= \Gamma_{ia}^j\pl_jF.\\
\end{split}
\eeq
Now, equations \eqref{eq:geometric_cond2}   and \eqref{eq:q_ai_in_natural_frame_1} yield
\beq
\label{eq:q_ai_in_natural_frame_3}
\begin{split}
0 &= P_{F(\S)}^\perp (\pl_i \qperp_a - \Gamma_{ia}^b\qperp_b) 
	= P_{F(\S)}^\perp (\pl_i \qperp_a) - \Gamma_{ia}^b\qperp_b.
\end{split}
\eeq
Combining \eqref{eq:q_ai_in_natural_frame_2} and \eqref{eq:q_ai_in_natural_frame_3} we obtain
\beq
\label{eq:q_ai_in_natural_frame_final}
\pl_i \qperp_a = P_{F(\S)}^\parallel (\pl_i \qperp_a) + P_{F(\S)}^\perp (\pl_i \qperp_a) = \Gamma_{ia}^j\pl_jF + \Gamma_{ia}^b\qperp_b.
\eeq
Using \eqref{eq:q_ai_in_natural_frame_final} we have
\[
\pl_i\pl_jF \cdot \qperp_a = \pl_i\g_{ja} - \pl_jF\cdot \pl_i\qperp_a = -\Gamma_{ia}^l\g_{lj},
\]
hence in \eqref{eq:F_ij_in_natural_frame_1} the coefficients $A_{ij}^a$ satisfy
\[
A_{ij}^a = -\Gamma_{ib}^l\g_{lj}\g^{ab}.
\]
Therefore the equation for $\pl_i\pl_j F$ is 
\beq
\label{eq:F_ij_in_natural_frame_final}
\pl_i\pl_j F = \Gamma_{ij}^l\pl_l F - \Gamma_{ib}^l\g_{lj}\g^{ab} \qperp_a.
\eeq
Since $\g_{ij}$, $\g^{ab}$ and $\Gamma_{IJ}^K$ are smooth functions, \eqref{eq:q_ai_in_natural_frame_final} and \eqref{eq:F_ij_in_natural_frame_final} show that $F$ and $\qperp$ are actually smooth, by a bootstrap argument.

Given the smoothness, the uniqueness follows from \cite[Section 3]{Ten71}, as explained in the proof of Lemma~\ref{prop:curvature_condition_equivalence} below. 
\end{proof}


Next, we prove several lemmas leading to the proof of \propref{prop:curvature_condition_equivalence}. 
In the next two lemmas, we give a geometric interpretation of what does it mean for a pair $(F,\qperp)$ to satisfy $E_{\S}(F,\qperp)=0$.
Recall that $E_\S(F,\qperp)=0$ if and only if $q:=dF\oplus \qperp \in \SO{\g,\euc}$, and \eqref{eq:geometric_cond1} and \eqref{eq:geometric_cond2} hold, i.e.~$-\Ppar_{\S}\circ q^{-1}  \circ \nabla q^\perp = \II_{\S,\M}$ and $\Pperp_{\S}\circ q^{-1} \circ \nabla \qperp = 0$. 

In these lemmas we will repeatedly identify $\S$ and $F(\S)$, 
and therefore we can view $f:\S\to \R$ as a function on $F(\S)$. Under this identification $X(f) = dF(X)(f)$ for every $X\in T\S$, where in the right-hand side we consider $f$ as a function $F(\S)\to \R$.
This identification also extends to the trivial bundles $\S\times \R^n$ and $T\R^n|_{F(\S)} = F(\S) \times \R^n$.
Slightly abusing notation, we will denote the (trivial) connections on both bundles by $\nabla^{\R^n}$.
The identification $X(f) = dF(X)(f)$ extends (entry-wise) to $\nabla^{\R^n}$; 
namely, for $f:\S\to \R^n$ and $X\in T\S$, $\nabla_X^{\R^n} f = \nabla_{dF(X)}^{\R^n} f$, where in the right-hand side $f$ is considered as a section of $T\R^n|_{F(\S)}$.

\begin{lemma}[Equality of second fundamental forms]
\label{lem:second_form}
Assume $q \in \SO{\g,\euc}$.
$\II_{\S,\M}=-\Ppar_{\S} \circ q^{-1}\circ \nabla \qperp$ holds if and only if
\[
dF(\II_{\S,\M}(X,\eta))=\II_{F(\S),\R^n}(dF(X),q^{\perp}(\eta)) \, \, \text{for every } (X,\eta) \in \TS \times \NS.
\]
\end{lemma}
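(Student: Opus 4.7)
The plan is to unwind both sides of the claimed identity directly from the definition of the shape operator, using the fact that the pointwise isometry $q = dF \oplus \qperp$ identifies the decomposition $T\M|_\S = T\S\oplus \NS$ with the decomposition $\R^n = dF(T\S)\oplus \qperp(\NS)$ along $F(\S)$. The first step is to verify that these two subspaces of $\R^n$ are the tangent and normal spaces of $F(\S)$ at $F(p)$ respectively, which is immediate from $q\in\SO{\g,\euc}$: tangent vectors to $F(\S)$ at $F(p)$ are images of $T\S$ under $dF$, and $\qperp(\NS)$ is the orthogonal complement since $q$ is an isometry. Consequently one has the crucial intertwining relations
\[
dF\circ \Ppar_\S = P^\parallel_{F(\S)}\circ q,\qquad \qperp\circ \Pperp_\S = P^\perp_{F(\S)}\circ q,
\]
viewed as maps on $T\M|_\S$.

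Next, I would compute $\II_{F(\S),\R^n}(dF(X),\qperp(\eta))$ using $\qperp$ itself to produce a normal extension. Given $\eta\in \NS_p$, let $N$ be a local extension of $\eta$ as a section of $\NS$; then $p \mapsto \qperp_p(N(p))$, interpreted through the identification $F:\S\to F(\S)$, is a normal extension of $\qperp(\eta)$ along $F(\S)$. Applying the Leibniz rule for the induced connection on $\NS^*\otimes \R^n$, compatible with the trivial connection $\nabla^{\R^n}$ on the target, yields
\[
\nabla^{\R^n}_{dF(X)}(\qperp\circ N) = (\nabla_X \qperp)(N) + \qperp(\nabla^{\NS}_X N).
\]
The second summand is an element of $\qperp(\NS)$, hence normal to $F(\S)$, and is therefore annihilated by $P^\parallel_{F(\S)}$. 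Evaluating at $p$ gives
\[
\II_{F(\S),\R^n}(dF(X),\qperp(\eta)) = -P^\parallel_{F(\S)}\bigl((\nabla_X \qperp)(\eta)\bigr).
\]

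The third step is to combine these identities. Applying $dF$ to the assumed relation $\II_{\S,\M}(X,\eta) = -\Ppar_\S\circ q^{-1}\bigl((\nabla_X \qperp)(\eta)\bigr)$ and using the intertwining relation from the first step,
\[
dF\bigl(\II_{\S,\M}(X,\eta)\bigr) = -P^\parallel_{F(\S)}\bigl(q\circ q^{-1}(\nabla_X \qperp)(\eta)\bigr) = -P^\parallel_{F(\S)}\bigl((\nabla_X \qperp)(\eta)\bigr),
\]
which by the previous paragraph equals $\II_{F(\S),\R^n}(dF(X),\qperp(\eta))$. Since $dF$ is a fiberwise isomorphism from $T\S$ onto $T(F(\S))$, this chain of equalities is reversible, proving the converse as well.

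I do not expect a serious obstacle; the argument is essentially a careful bookkeeping of the two naturally equivalent formulations of the shape operator under the identification given by $q$. The only point that requires care is keeping track of the connections: verifying that the connection $\nabla$ on $\NS^*\otimes\R^n$ used in the definition of $E_\S$ is precisely the one compatible with $\nabla^{\NS}$ on the source and the flat Euclidean connection on the trivial $\R^n$-factor, so that the Leibniz rule applied in the middle step is genuinely the one the definition demands. This is built into the setup of \cite{KS14}, so once the identifications are made explicit the computation is transparent.
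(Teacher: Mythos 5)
Your argument is correct and is essentially the paper's own proof: both rest on the intertwining of the parallel projections with $q$ (your $dF\circ \Ppar_\S = P^\parallel_{F(\S)}\circ q$ is the paper's $\Ppar_\S\circ q^{-1} = q^{-1}\circ P^\parallel_{F(\S)}$), the Leibniz rule for the induced connection on $\NS^*\otimes\R^n$ applied to $\qperp(N)$ for a normal extension $N$ of $\eta$, and the observation that $\qperp(N)$ serves as a normal extension of $\qperp(\eta)$ along $F(\S)$, so that $P^\parallel_{F(\S)}\nabla^{\R^n}_{dF(X)}(\qperp(N))$ is exactly $-\II_{F(\S),\R^n}(dF(X),\qperp(\eta))$. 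The only cosmetic difference is the order of the computation (you compute $\II_{F(\S),\R^n}$ first and then apply $dF$ to the hypothesis, while the paper transforms the hypothesis and recognizes the definition at the end), and reversibility via injectivity of $dF$ is the same in both.
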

This lemma shows that $\II_{F(\S),\R^n}$ and $\II_{\S,\M}$ coincide, when we identify $\TS \cong dF(\TS),\NS \cong \N F(\S)$ using the maps $dF$ and $\qperp$, respectively.
Here $\N F(\S):=\brk{ dF\brk{\TS}}^{\perp}$ is the normal bundle to the image $F(S)$ in $\R^n$.

\begin{proof}
Let $(X,\eta) \in T_p\S \times \N_p\S$ and let $N$ be a local extension of $\eta$ normal to $\S$. Then, identifying the trivial bundle $\S\times \R^n$ with $T\R^n|_{F(\S)}$, and using  the identity
$P_{\S}^{\parallel} \circ q^{-1}=q^{-1} \circ P_{F(\S)}^{\parallel}$ (which holds since $q\in \SO{\g,\euc}$), we have
\[
\begin{split}
dF_p\circ P_{\S}^{\parallel} \circ q^{-1} \big((\nabla_{X}^{\NS^* \otimes \R^n} q^{\perp})(\eta) \big)
	&= P_{F(\S)}^{\parallel} \big((\nabla_{X}^{\NS^* \otimes \R^n} q^{\perp})(\eta) \big) \\
	&= P_{F(\S)}^{\parallel} \big(\nabla_{X}^{\R^n} (q^{\perp}(N)) -  \qperp( \nabla_X^{\NS} N)\big)\\
	&=P_{F(\S)}^{\parallel} \nabla_{X}^{\R^n} (q^{\perp}(N)),
\end{split}
\]
Hence $\II_{\S,\M}=-\Ppar_{\S} \circ q^{-1}\circ \nabla \qperp$ is equivalent to 
\[
dF_p(\II_{\S,\M}(X,\eta))= -P_{F(\S)}^{\parallel} \brk{\nabla^{\R^n}_{dF_p(X)} (q^{\perp}(N))}.
\]
On the other hand, the right-hand side of this equality is the definition of $\II_{F(\S),\R^n}(dF_p(X),q^{\perp}(\eta))$.
Therefore, we obtain,
\[
\II_{\S,\M}=-\Ppar_{\S} \circ q^{-1}\circ \nabla \qperp \iff dF(\II_{\S,\M}(X,\eta))=\II_{F(\S),\R^n}(dF_p(X),q^{\perp}(\eta)).
\]
\end{proof}

\begin{lemma}[Equality of normal connections]
\label{lem:normal_connection}
Let $F,\qperp$ be smooth, and $q\in \SO{\g,\euc}$. Then
$P^{\perp}_\S \circ q^{-1} \circ \nabla q^{\perp}=0$ holds if and only if
\[
q^{\perp} (\nabla_X^{\NS} \sig)=\nabla^{NF(\S)}_{dF(X)}(q^{\perp}\sig)
\,\, \text{  for every  } \, X \in T\S \, \text{ and } \, \sig \in \Ga(\NS),
\]
where $\nabla^{NF(\S)}_{dF(X)}(q^{\perp}\sig) = P^{\perp}_{F(\S)} \bigg( \nabla_{X}^{\R^n}\big(q^{\perp}(\sig)\big)\bigg) $.
\end{lemma}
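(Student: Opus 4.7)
The plan is to mirror the argument of Lemma~\ref{lem:second_form}, working directly from the Leibniz rule for the tensor-product connection on $\NS^* \otimes \R^n$ and the fact that $q \in \SO{\g,\euc}$ intertwines the orthogonal projections on $\S$ and on $F(\S)$. The core identity we will use is
\[
P^{\perp}_\S \circ q^{-1} \;=\; q^{-1} \circ P^{\perp}_{F(\S)},
\]
which holds because $q$ carries $T\S$ isometrically onto $dF(T\S) = TF(\S)$, and hence $\qperp$ carries $\NS$ isometrically onto $\N F(\S)$.

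First I would fix $X \in T\S$ and $\sig \in \Gamma(\NS)$, extend $\sig$ as a local section of $\NS$, and apply the Leibniz rule:
\[
(\nabla_{X}^{\NS^* \otimes \R^n} \qperp)(\sig) \;=\; \nabla_{X}^{\R^n}\bigl(\qperp(\sig)\bigr) \;-\; \qperp\bigl(\nabla_X^{\NS}\sig\bigr).
\]
Applying $P^\perp_\S \circ q^{-1}$ to both sides and using the intertwining identity above, the condition $P^\perp_\S \circ q^{-1} \circ \nabla \qperp = 0$ is equivalent to
\[
P^\perp_{F(\S)}\Bigl(\nabla_{X}^{\R^n}\bigl(\qperp(\sig)\bigr) - \qperp\bigl(\nabla_X^{\NS}\sig\bigr)\Bigr) \;=\; 0,
\]
after precomposing with the isomorphism $q$ (equivalently, noting that $q^{-1}(v)=0$ iff $v=0$).

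Next I would observe that $\qperp(\nabla_X^{\NS}\sig) \in \N F(\S)$, since $\nabla_X^{\NS}\sig$ lies in $\NS$ and $\qperp$ maps $\NS$ into $\N F(\S)$. Hence $P^\perp_{F(\S)}$ fixes the second term, and the displayed equality becomes
\[
P^\perp_{F(\S)}\bigl(\nabla_{X}^{\R^n}(\qperp(\sig))\bigr) \;=\; \qperp(\nabla_X^{\NS}\sig),
\]
which, by the very definition of $\nabla^{\N F(\S)}$ given in the statement, is exactly $\nabla^{\N F(\S)}_{dF(X)}(\qperp \sig) = \qperp(\nabla_X^{\NS}\sig)$. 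The converse direction is obtained by reading the same chain of equivalences in reverse; no step uses anything beyond linearity of the projections and injectivity of $q$.

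I do not anticipate a genuine obstacle here: the statement is a formal consequence of the Leibniz rule together with the identification of the two normal bundles via $\qperp$. The only place demanding care is ensuring that the auxiliary identity $P^\perp_\S \circ q^{-1} = q^{-1} \circ P^\perp_{F(\S)}$ is invoked correctly (it is the precise analogue of the identity used in Lemma~\ref{lem:second_form} for the parallel projections), and that one remembers $\qperp(\NS) \subseteq \N F(\S)$ so that the extra projection on the $\qperp(\nabla^{\NS}\sig)$ term is trivial.
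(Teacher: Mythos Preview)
Your proposal is correct and follows essentially the same approach as the paper: both proofs expand $(\nabla_X \qperp)(\sig)$ via the Leibniz rule and then use the intertwining of the perpendicular projections under $q$ (the paper writes this as $q^\perp \circ P^{\perp}_\S \circ q^{-1} = P^{\perp}_{F(\S)}$, you as $P^{\perp}_\S \circ q^{-1} = q^{-1} \circ P^{\perp}_{F(\S)}$ followed by injectivity of $q^{-1}$). The only difference is the order in which the intertwining is invoked, which is cosmetic.
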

This lemma shows that the normal connections $\nabla^{\NS}$ and $\nabla^{NF(S)}$ coincide, under the identifications $\TS \cong dF(\TS),\NS \cong NF(S)$ induced by the maps $dF$ and $\qperp$, respectively.
\begin{proof}
Given $X \in \TS$ and  $ \sig \in \Ga(\NS)$ we have 
\[
\nabla_{X}^{\R^n}\big(q^{\perp}(\sig)\big)=\big(\nabla_{X}^{\NS \otimes \R^n}q^{\perp}\big)(\sig) + q^{\perp} \big( \nabla_{X}^{\NS} \sig \big),
\]
so
\[
P^{\perp}_\S \circ q^{-1} \bigg(\nabla_{X}^{\R^n}\big(q^{\perp}(\sig)\big)\bigg)=P^{\perp}_\S \circ q^{-1} \bigg(\big(\nabla_{X}^{\NS \otimes \R^n}q^{\perp}\big)(\sig)\bigg) +  \nabla_{X}^{\NS} \sig.
\]
Thus, $P^{\perp}_\S \circ q^{-1} \circ \nabla q^{\perp}=0$ holds if and only if
\[
P^{\perp}_\S \circ q^{-1} \bigg(\nabla_{X}^{\R^n}\big(q^{\perp}(\sig)\big)\bigg)= \nabla_{X}^{\NS}.
\]
Using $q^\perp \circ P_{\S}^{\perp} \circ q^{-1}= P_{F(\S)}^{\perp}$ (which holds since $q\in \SO{\g,\euc}$), we have that the above equation holds if and only if
\[
P^{\perp}_{F(\S)} \bigg( \nabla_{X}^{\R^n}\big(q^{\perp}(\sig)\big)\bigg)= q^{\perp}(\nabla_{X}^{\NS} \sig).
\]
\end{proof}


Next, we prove the final lemma required for establishing \propref{prop:curvature_condition_equivalence}. This lemma combines the previous two lemmas,  \ref{lem:second_form} and \ref{lem:normal_connection} and shows that the derivatives of the second fundamental forms coincide (again under the appropriate identifications).

In this lemma, we will use the following notation:
$B_{\S}:\TS \times \TS \to \NS$ is defined by $\inner{B_\S(X,Y),\eta} = \inner{\II_{\S,\M}(X,\eta),Y}$.
We also consider $B_\S$ as a map $\TS \times \TS \times \NS \to \R$, via $(X,Y,\eta) \mapsto \inner{B_\S(X,Y),\eta}$. 
Finally, we extend the covariant derivative to tensors of this type in the usual way, as follows:
\[
\nabla^\M_X B_\S(Y,Z,\eta)=X(B_{\S}(Y,Z,\eta))-B_{\S}(\nabla^{\S}_XY,Z,\eta)-B_{\S}(Y,\nabla^{\S}_XZ,\eta)-B_{\S}(Y,Z,\nabla^{\NS}_X\eta).
\]

\begin{lemma}[Coincidence of the derivatives]
\label{lem:everything_pullbacks}
Let $(F,\qperp)$ satisfy \eqref{eq:geometric_cond0}-\eqref{eq:geometric_cond2}.
Then, for every $X,Y\in \Gamma(T\S)$ and $\eta\in \Gamma(\NS)$ the following hold:
\begin{enumerate}
\item
\beq
\label{eq:eq_tautology0}
q^{\perp} (B_\S(X,Y))=B_{F(\S)}(dF(X),dF(Y)).
\eeq
\item
\beq
\label{eq:eq_tautology-2}
B_\S(X,Y,\eta)=B_{F(\S)}(dF(X),dF(Y),\qperp(\eta)).
\eeq
\item
\beq
\label{eq:eq_tautology-1}
\nabla^\M_X B_\S(Y,Z,\eta) = \nabla^{\R^n}_{dF(X)} B_{F(\S)}(dF(Y),dF(Z),\qperp(\eta)).
\eeq
\end{enumerate}
\end{lemma}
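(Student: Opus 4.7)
The plan is to prove the three parts in order, with Parts 1 and 2 being essentially bookkeeping on top of Lemmas~\ref{lem:second_form} and~\ref{lem:normal_connection}, and Part 3 being the substantial step that combines them.

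For Part 1, I would fix $\eta \in \Gamma(\NS)$ and compute
\[
\inner{q^\perp(B_\S(X,Y)), \qperp(\eta)} = \inner{B_\S(X,Y), \eta} = \inner{\II_{\S,\M}(X,\eta), Y},
\]
using that $q \in \SO{\g,\euc}$ restricts to an isometry on $\NS$. Since $dF$ is an isometry on $T\S$ (again from $q \in \SO{\g,\euc}$), this equals $\inner{dF(\II_{\S,\M}(X,\eta)), dF(Y)}$. Lemma~\ref{lem:second_form} then replaces $dF(\II_{\S,\M}(X,\eta))$ by $\II_{F(\S),\R^n}(dF(X), \qperp(\eta))$, yielding $\inner{B_{F(\S)}(dF(X), dF(Y)), \qperp(\eta)}$. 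Since $\qperp$ maps $\NS$ onto $\N F(\S)$, this identifies the two normal vectors. Part 2 is then immediate by pairing both sides of Part 1 with $\qperp(\eta)$ and again using that $\qperp$ is an isometry.

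For Part 3, I would expand both sides by the Leibniz definition of the covariant derivative given just before the lemma. The left-hand side has four terms; the right-hand side, expanded as $\nabla^{\R^n}_{dF(X)} B_{F(\S)}(dF(Y),dF(Z),\qperp(\eta))$, also has four terms. I would match them one-by-one: (i) the scalar terms $X(B_\S(Y,Z,\eta))$ and $dF(X)(B_{F(\S)}(dF(Y),dF(Z),\qperp(\eta)))$ agree because Part 2 says the scalar functions coincide under the identification $\S \cong F(\S)$, and by definition $X(f) = dF(X)(f)$ for such functions; (ii) the two tangent slots require $dF(\nabla^\S_X Y) = \nabla^{F(\S)}_{dF(X)} dF(Y)$, which holds because $dF: (T\S, \g|_\S) \to (TF(\S), \euc|_{F(\S)})$ is a bundle isometry (by $q \in \SO{\g,\euc}$ restricted to tangent directions), so by uniqueness the Levi-Civita connections are $dF$-related; (iii) the normal slot is precisely Lemma~\ref{lem:normal_connection}, which gives $\qperp(\nabla^\NS_X \eta) = \nabla^{\N F(\S)}_{dF(X)}(\qperp \eta)$. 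Combining (i)--(iii) with Part 2 applied inside each remaining $B$-term completes the match.

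The main obstacle I anticipate is the tangent-connection step (ii), since it is the only piece not already phrased as a pullback identity in the earlier lemmas. However, it reduces to a standard fact: any isometry of Riemannian bundles intertwines the associated Levi-Civita connections. The relevant verification is that $F: (\S, \g|_{T\S}) \to (F(\S), \euc|_{TF(\S)})$ is an isometric immersion (which follows from $dF \oplus \qperp \in \SO{\g,\euc}$), after which $dF \circ \nabla^\S = \nabla^{F(\S)} \circ dF$ is automatic. Once this is recorded, Part 3 becomes a term-by-term substitution, and the proof closes.
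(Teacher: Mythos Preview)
Your proposal is correct and follows essentially the same approach as the paper: Parts 1 and 2 are obtained by the same inner-product computation via Lemma~\ref{lem:second_form} and the isometry of $q^\perp$, and Part 3 is proved by expanding both sides via the Leibniz rule and matching the four terms using (i) the identification $\S\cong F(\S)$, (ii) the fact that the isometry $F$ intertwines the Levi-Civita connections, and (iii) Lemma~\ref{lem:normal_connection}. The only difference is cosmetic ordering; the substance is identical.
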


\begin{proof}
\lemref{lem:second_form}, together with the fact that $F:\S\to F(\S)$ is an isometry, implies
\beq
\label{eq:eq_tautology1}
\begin{split}
\inner{B_\S(X,Y),\eta}&=\inner{\II_{\S,\M}(X,\eta),Y}=\inner{dF(\II_{\S,\M}(X,\eta)),dF(Y)}\\
&=\inner{\II_{F(\S),\R^n}\brk{dF(X),q^{\perp}(\eta)},dF(Y)}\\
&= \inner{B_{F(\S)}(dF(X),dF(Y)),q^{\perp}(\eta)}.
\end{split}
\eeq
Since $q^{\perp}:\NS\to \N F(\S)$ is an isometry,
\beq
\label{eq:eq_tautology2}
\inner{B_\S(X,Y),\eta}=\inner{q^{\perp}(B_\S(X,Y)),q^{\perp}(\eta)}.
\eeq
Combining \eqref{eq:eq_tautology1} and \eqref{eq:eq_tautology2} proves \eqref{eq:eq_tautology0} and \eqref{eq:eq_tautology-2}.

We now prove \eqref{eq:eq_tautology-1}. Using \eqref{eq:eq_tautology-2}, we get
\[
\begin{split}
\nabla^\M_X B_\S(Y,Z,\eta) &= X\bigg( B_{F(\S)}\big(dF(Y),dF(Z),\qperp(\eta)\big)\bigg)-B_{F(\S)}\big(dF(\nabla^{\S}_XY),dF(Z),q^{\perp}(\eta)\big)\\ 
&\quad - B_{F(\S)}\big(dF(Y),dF(\nabla^{\S}_XZ),q^{\perp}(\eta)\big)-B_{F(\S)}\big(dF(Y),dF(Z),q^{\perp}(\nabla^{\NS}_X\eta) \big).
\end{split}
\]
On the other hand,
\[
\begin{split}
\nabla^{\R^n}_{dF(X)} B_{F(\S)}(dF(Y),dF(Z),\qperp(\eta)) &= dF(X)\bigg( B_{F(\S)}\big(dF(Y),dF(Z),\qperp(\eta)\big)\bigg)-B_{F(\S)}\big(\nabla^{F(\S)}_{dF(X)}dF(Y),dF(Z),q^{\perp}(\eta)\big)\\ 
&\quad - B_{F(\S)}\big(dF(Y),\nabla^{F(\S)}_{dF(X)}dF(Z),q^{\perp}(\eta)\big)-B_{F(\S)}\big(dF(Y),dF(Z),(\nabla^{NF(\S)}_{dF(X)}(q^{\perp}\eta) \big).
\end{split}
\]
The first summand is the same by the identification of $\S$ and $F(\S)$ discussed before Lemma~\ref{lem:second_form}.
The second summand is the same since $dF(\nabla^{\S}_XY)=\nabla^{F(\S)}_{dF(X)}dF(Y)$ because $F:S \to F(S)$ is an isometry, hence preserves the connection.
The last two summands are the same by \lemref{lem:normal_connection}.
\end{proof}

Finally, we use the above to prove part 3 of Theorem~\ref{thm:energy_scaling_general}.
\begin{proposition}
\label{prop:curvature_condition_equivalence}
Let $\Riem^\M$ the Riemannian curvature tensor of $\M$. 
Assume there exists $F$ and $\qperp$ that satisfy equations \eqref{eq:geometric_cond0}-\eqref{eq:geometric_cond2}, then
\[
\Riem^\M(X,Y) = 0\,\qquad \forall X,Y\in T\S.
\]
If $\S$ is simply connected, then the converse holds.
Moreover, $F$ and $\qperp$ are unique up to a rigid motion.
\end{proposition}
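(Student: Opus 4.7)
The plan is to apply the Gauss-Codazzi-Ricci equations to both submanifolds $\S \hookrightarrow \M$ and $F(\S) \hookrightarrow \R^n$, and use the identifications already established in Lemmas \ref{lem:second_form}--\ref{lem:everything_pullbacks} to compare their right-hand sides. Recall that these three equations jointly determine all components of $\langle \Riem^\M(X,Y)\,\cdot\,,\cdot\,\rangle$ for $X,Y \in T\S$: Gauss recovers the tangential-tangential part from $\Riem^\S$ and a quadratic expression in $B_\S$; Codazzi recovers the tangential-normal part from $\nabla B_\S$; and Ricci recovers the normal-normal part from the curvature of the normal connection $\nabla^{\NS}$ and a contraction of $B_\S$ with itself.

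For the forward direction, the same three equations hold for $F(\S) \subset \R^n$ but with zero on the left, since $\R^n$ is flat. By \eqref{eq:geometric_cond0}, $F:\S\to F(\S)$ is a Riemannian isometry and $\qperp:\NS\to \N F(\S)$ is a bundle isometry, so the intrinsic curvatures $\Riem^\S$ and $\Riem^{F(\S)}$ correspond via $dF$. By Lemma \ref{lem:everything_pullbacks}, $B_\S$ and $\nabla B_\S$ correspond to $B_{F(\S)}$ and $\nabla B_{F(\S)}$, and by Lemma \ref{lem:normal_connection} the normal connections $\nabla^{\NS}$ and $\nabla^{\N F(\S)}$ correspond under $\qperp$, so a direct calculation of $[\nabla_X,\nabla_Y]-\nabla_{[X,Y]}$ on normal sections shows that their curvatures correspond as well. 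Subtracting the Gauss-Codazzi-Ricci equations for $\S\subset \M$ from those for $F(\S)\subset \R^n$ therefore yields $\langle \Riem^\M(X,Y)V,W\rangle = 0$ for all $X,Y\in T\S$ and $V,W\in T\M|_\S$, which is precisely $\Riem^\M(X,Y)=0$ on $T\S$.

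For the converse, assume $\S$ is simply connected and $\Riem^\M(X,Y)=0$ for all $X,Y\in T\S$. Reading the Gauss-Codazzi-Ricci equations for $\S\subset\M$ in reverse shows that the triple $(\g|_\S,\,\II_{\S,\M},\,\nabla^{\NS})$ satisfies the integrability conditions of the fundamental theorem of submanifolds in Euclidean space. By Tenenblat's theorem \cite{Ten71}, there exist an isometric immersion $F:\S\to \R^n$ and a bundle isometry $\qperp:\NS\to \N F(\S)$ inducing this data, unique up to a rigid motion of $\R^n$. A direct computation, essentially reversing Lemmas \ref{lem:second_form} and \ref{lem:normal_connection}, then shows that $(F,\qperp)$ satisfies \eqref{eq:geometric_cond0}--\eqref{eq:geometric_cond2}, and Tenenblat's theorem also provides the asserted uniqueness. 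The main technical point to watch out for is the bookkeeping in the Ricci equation in arbitrary codimension -- specifically, verifying that $\qperp$ really does intertwine the two normal-bundle curvature tensors -- but this is a formal consequence of Lemma \ref{lem:normal_connection} once one differentiates the intertwining relation in a second direction.
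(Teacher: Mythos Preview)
Your proposal is correct and follows essentially the same route as the paper's proof: apply the Gauss, Codazzi, and Ricci equations to both $\S\subset\M$ and $F(\S)\subset\R^n$, use Lemmas~\ref{lem:second_form}--\ref{lem:everything_pullbacks} (and the fact that $F$ is an isometry) to match all right-hand sides, and conclude that the three components of $\Riem^\M(X,Y)$ vanish; then invoke Tenenblat's theorem for the converse and uniqueness. The only cosmetic difference is that the paper writes out each of the three equations explicitly and also notes separately that the mixed component $\langle\Riem^\M(X,Y)\eta,Z\rangle$ vanishes by the symmetries of the curvature tensor together with Codazzi.
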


\begin{proof}
In this proof, $X,Y,Z,T\in T\S$, and $\eta,\zeta\in N\S$. 
First assume the existence of such $F,\qperp$. 
The Gauss equation \cite[Chapter 6, Proposition 3.1]{doC92}, together with \eqref{eq:eq_tautology0} and the fact that $dF\oplus \qperp$ is an isometry, imply
\beq
\label{eq:Gauss}
\begin{split}
&\inner{\Riem^\M(X,Y)Z,T} = \inner{R^\S(X,Y)Z,T} - \inner{B_\S(Y,T),B_\S(X,Z)} + \inner{B_\S(X,T),B_\S(Y,Z)}\\
	&\quad= \inner{\Riem^{F(\S)}(dF(X),dF(Y))dF(Z),dF(T)} - \inner{B_{F(\S)}(dF(Y),dF(T)),B_{F^(\S)}(dF(X),dF(Z))}\\
	& \quad \quad+\inner{B_{F(\S)}(dF(X),dF(T)),B_{F(\S)}(dF(Y),dF(Z))}\\
	&\quad = \inner{\Riem^{\R^n}(dF(X),dF(Y))dF(Z),dF(T)} =0.
\end{split}
\eeq
Applying the Coddazi equation \cite[Chapter 6, Proposition 3.4]{doC92}, and using \eqref{eq:eq_tautology-1} we have that
\beq
\label{eq:Coddazi}
\begin{split}
\inner{\Riem^\M(X,Y)Z,\eta} &= \nabla^\M_Y B_\S(X,Z,\eta) - \nabla^\M_X B_\S(Y,Z,\eta)\\
	& = \nabla^{\R^n}_{dF(Y)} B_{F(\S)}(dF(X),dF(Z),\qperp(\eta)) - \nabla^{\R^n}_{dF(X)} B_{F(\S)}(dF(Y),dF(Z),\qperp(\eta)) \\
	& = \inner{\Riem^{\R^n}(dF(X),dF(Y))dF(Z),\qperp(\eta)} =0.
\end{split}
\eeq
\eqref{eq:Gauss} and \eqref{eq:Coddazi} together imply
\beq
\label{eq:R_XYZ_eq_0}
\Riem^\M(X,Y)Z = 0.
\eeq
Finally, the equality of the normal connections (\lemref{lem:normal_connection}) implies equality of the normal curvatures(i.e.~the curvature tensors associated with the normal connections)
\[
\inner{\Riem^\perp_{\S}(X,Y) \eta,\zeta} = \inner{\Riem^\perp_{F(\S)}(dF(X),dF(Y)) \qperp(\eta),\qperp(\zeta)},
\]
and therefore, using Ricci equation \cite[Chapter 6, Proposition 3.1]{doC92}, we have
\[
\begin{split}
\inner{\Riem^\M(X,Y)\eta,\zeta} 
	&= \inner{[\II_{\S,\M}(X,\eta),\II_{\S,\M}(X,\zeta)], Y} + \inner{\Riem^\perp_{\S}(X,Y) \eta,\zeta} \\
	&= \inner{[\II_{F(\S),\R^n}(dF(X),\qperp(\eta)),\II_{F(\S),\R^n}(dF(X),\qperp(\zeta))], dF(Y)} + \inner{\Riem^\perp_{F(\S)}(dF(X),dF(Y)) \qperp(\eta),\qperp(\zeta)} \\
	&=\inner{\Riem^{\R^n}(dF(X),dF(Y))\qperp{\eta},\qperp{\zeta}} = 0.
\end{split}
\]
The Codazzi equation \eqref{eq:Coddazi}, and the symmetries of $\R^\M$ also imply that
\[
\inner{\Riem^\M(X,Y)\eta,Z} = -\inner{\Riem^\M(X,Y)Z,\eta} =0,
\]
and therefore
\[
\Riem^\M(X,Y)\eta =0.
\]
Together with \eqref{eq:R_XYZ_eq_0}, this implies that
\[
\Riem^\M(X,Y) = 0.
\]
Now assume $\Riem^\M(X,Y) = 0$.
Then $\II_{\S,\M}$ and $\nabla^\perp$ satisfy the Gauss-Ricci-Codazzi equations with zero left-hand side, hence by \cite[Section 3]{Ten71} there exist, locally, smooth $F,\qperp$ as required, and are unique up to a rigid motion.\footnote{The main theorem in \cite{Ten71} only states the uniqueness of $F$, however its proof (specifically, the last paragraph on p.~34) shows the uniqueness of $\qperp$ as well.} 
Finally, if $\S$ is simply connected, then $F$ and $\qperp$ can be chosen on whole $\S$ (see remark at the end of \cite{Ten71}, or \cite[Section 3.2]{Che00}).
\end{proof}

\subsection{Proofs regarding the scaling $\inf E_{\S_h} = O(h^4)$}

\label{sec:proofs_h_4}
In this section we prove the results concerning the $h^4$ energy scaling; namely, that $\inf E_{\S_h} = o(h^2)$ implies $\inf E_{\S_h} = O(h^4)$ (thus completing the proof of part 2 of Theorem~\ref{thm:energy_scaling_general}) and that $\inf h^{-4}E_{\S_h}$ is bounded from below by an integral of the curvature along $\S$ (part 4 of Theorem~\ref{thm:energy_scaling_general}).

\begin{lemma}
\label{lm:o_h_2_O_h_4}
If $\inf E_{\S_h} = o(h^2)$, then there exists a sequence of maps $u_h\in W^{1,2}(\S_h;\R^n)$ such that $E_{\S_h}[u_h] < Ch^4$ for some constant $C>0$ depending on $(\M,\g)$.
\end{lemma}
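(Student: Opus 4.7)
The plan is to exhibit an explicit recovery sequence built from the smooth minimizer $(F,\qperp)$ provided by Lemma~\ref{lm:min_E_S_smooth}. Indeed, the hypothesis $\inf E_{\S_h}=o(h^2)$ combined with the $\Gamma$-convergence result of \cite{KS14} (recalled just before the proof of Theorem~\ref{thm:energy_scaling_general}) forces $\min E_\S=0$; Lemma~\ref{lm:min_E_S_smooth} then produces a \emph{smooth} pair $(F,\qperp)$ satisfying $dF\oplus\qperp\in \SO{\g,\euc}$ pointwise together with the compatibility relation $\nabla\qperp=-dF\circ\II_{\S,\M}$. The heuristic is that the standard recovery sequence of \cite{KS14} was designed to match the leading $h^2$-order of the energy; when one plugs in a \emph{smooth exact} zero of $E_\S$ rather than an approximating sequence, the first-order residual cancels and the energy density improves from $o(1)$ to $O(h^4)$.

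Concretely, I would work in Fermi coordinates and define the ansatz
\[
u_h(\exp_p(v)):=F(p)+\qperp_p(v),\qquad p\in\S,\ v\in \NS_p,\ |v|\le h.
\]
This map is smooth on $\S_h$, hence in $W^{1,2}(\S_h;\R^n)$. At a point $\exp_p(v)$ I compute $du_h$ in an adapted $\g$-orthonormal frame (tangential to $\S$ plus the radial extension of a normal frame along $t\mapsto\exp_p(tv)$). On the radial direction, $du_h$ acts as $\qperp_p$ by construction; on a horizontal lift of $X\in T_p\S$, Taylor expansion in $v$ gives
\[
du_h(X^{\mathrm{hor}})=dF(X)+(\nabla_X\qperp)(v)+O(|v|^2),
\]
uniformly in $p\in\S$. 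To compare with $\SO{\g,\euc}$, I parallel transport $dF\oplus\qperp$ radially to obtain an element $q(v)\in \SO{\g,\euc}$ at $\exp_p(v)$; the Gauss--Weingarten description of how the adapted frame evolves off $\S$ shows that the $O(|v|)$ part of $q(v)$ is determined by $\II_{\S,\M}$. Combined with $\nabla\qperp=-dF\circ\II_{\S,\M}$, the linear-in-$v$ terms of $du_h-q(v)$ cancel, leaving a pointwise remainder of order $|v|^2$. Consequently $\dist^2(du_h,\SO{\g,\euc})=O(|v|^4)=O(h^4)$, and integrating (with the normalization by $\Vol(\S_h)$) yields $E_{\S_h}[u_h]\le Ch^4$ with $C$ depending only on $\|F\|_{C^2}$, $\|\qperp\|_{C^1}$, and the geometry of $(\M,\g)$ and $\S$, all finite by Lemma~\ref{lm:min_E_S_smooth} and compactness of $\bar\S$.

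The main obstacle is the algebraic verification that the $O(|v|)$ terms in $du_h-q(v)$ cancel. This is the infinitesimal counterpart of Lemmas~\ref{lem:second_form} and~\ref{lem:normal_connection}: those lemmas identified the second fundamental form and the normal connection of $F(\S)\subset\R^n$ with those of $\S\subset\M$ under $dF$ and $\qperp$, and here we must track how this identification propagates one order off $\S$ in the radial direction. Since the compatibility $\nabla\qperp=-dF\circ\II_{\S,\M}$ is precisely the condition that encodes this first-order propagation, the cancellation is essentially tautological once the adapted-frame bookkeeping is set up. With that step in hand, smoothness of $(F,\qperp)$ upgrades the $o(h^2)$ estimate of the \cite{KS14} recovery sequence to the claimed $O(h^4)$ bound.
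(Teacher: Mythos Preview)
Your proposal is correct and follows essentially the same approach as the paper: both use the smooth minimizer $(F,\qperp)$ from Lemma~\ref{lm:min_E_S_smooth} and the same Fermi-coordinate ansatz $u_h(\exp_p(v))=F(p)+\qperp_p(v)$, then argue that the pointwise distortion is $O(|v|^2)$. The only difference is that the paper delegates the $\dist(du_h,\SO{\g,\euc})=O(h^2)$ estimate to \cite[Proposition~6.3]{KS14}, whereas you sketch the first-order cancellation (via $\nabla\qperp=-dF\circ\II_{\S,\M}$ and the Gauss--Weingarten evolution of the adapted frame) directly.
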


\begin{proof}
This follows from the analysis in \cite[Proposition~6.3]{KS14}. 
Indeed, $\inf E_{\S_h} = o(h^2)$ implies $\min E_\S = 0$. 
Therefore, by Lemma~\ref{lm:min_E_S_smooth}, there exists smooth $F:\S\to \R^n$ and $q^\perp: S\to \NS^*\otimes \R^n$ such that $dF\oplus q^\perp \in \SO{\g,\euc}$ and $\nabla q^\perp = -dF\circ \II_{\S,\M}$.

Using the coordinates and index conventions of Lemma~\ref{lm:min_E_S_smooth}, define $u_h(x^i,x^a) = F(x^i) + q^\perp(x^i,x^a\pl_a)$ (this is the coordinate equivalent of the recovery sequence \cite[Equation~(6.1)]{KS14}).
The analysis in the proof of \cite[Proposition~6.3]{KS14} implies that
\[
\dist(du_h,\SO{\g,\euc}) = (|\nabla q^\perp| + |du_h|)O(h^2) = O(h^2),
\]
where the second equality follows from the fact that $q^\perp$ and $u_h$ are uniformly bounded in $C^1$.
Therefore,
\[
E_{\S_h}[u_h] = \dashint_{\S_h} \dist^2(du_h,\SO{\g,\euc})\,d\Volg \le \dashint_{\S_h} Ch^4 \,d\Volg = Ch^4.
\]

\end{proof}


For the proof of Lemma~\ref{lm:o_h_4} below, we need the following immediate corollary of Theorem~\ref{thm:curvature_norm_asymptotic}:
\begin{corollary}
\label{cor:curvature_norm_asymptotic_uniform}
Let $K\subset \M$ be compact. Then
\beq
\label{eq:curvature_norm_asymptotic_uniform}
\lim_{h\to 0} \sup_{q\in K} \Abs{\inf_{u\in W^{1,2}(B_h(q);\R^n)} \frac{1}{h^4} E_{B_h(q)}[u] - |\Riem_q|^2} = 0.
\eeq
\end{corollary}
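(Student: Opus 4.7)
I would prove this by a straightforward contradiction argument, using Theorem~\ref{thm:curvature_norm_asymptotic_refined} as a black box together with the continuity of $p \mapsto |\Riem_p|^2$ observed in the remark following Theorem~\ref{thm:curvature_norm_asymptotic}. The point is that pointwise convergence on sequences implies uniform convergence on compact sets, provided the limit is continuous; Theorem~\ref{thm:curvature_norm_asymptotic_refined} gives us exactly the "sequential" convergence that is needed.

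Concretely, I would suppose for contradiction that \eqref{eq:curvature_norm_asymptotic_uniform} fails. Then there exist $\e>0$, a sequence $h_n\to 0$, and points $q_n\in K$ such that
\[
\Abs{\inf_{u\in W^{1,2}(B_{h_n}(q_n);\R^n)} \frac{1}{h_n^4} E_{B_{h_n}(q_n)}[u] - |\Riem_{q_n}|^2} \ge \e
\]
for every $n$. Since $K$ is compact, after passing to a subsequence I may assume $q_n\to q$ for some $q\in K$.

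Now I would apply Theorem~\ref{thm:curvature_norm_asymptotic_refined} to the sequence $p_h := q_n$ (inserting constant subsequences between the chosen $h_n$'s if one wants a full family $p_h$, or alternatively observing that the proof of Theorem~\ref{thm:curvature_norm_asymptotic_refined} goes through verbatim for any sequence of radii tending to zero, not just for a continuous parameter). This gives
\[
\lim_{n\to\infty} \frac{1}{h_n^4}\inf E_{B_{h_n}(q_n)} = |\Riem_q|^2.
\]
Moreover, since the coefficients $a^{ijklabcd}$ defining $|\cdot|$ are independent of the base point (as noted in the remark after Theorem~\ref{thm:curvature_norm_asymptotic}) and $q\mapsto \Riem_q$ is smooth, we have $|\Riem_{q_n}|^2 \to |\Riem_q|^2$. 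Combining these two limits contradicts the bound $\ge\e$ displayed above, completing the proof.

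The main (minor) obstacle is the small bookkeeping point that Theorem~\ref{thm:curvature_norm_asymptotic_refined} is phrased for a family $p_h$ indexed by the continuous parameter $h\to 0$, whereas the contradiction produces only a discrete sequence $(q_n,h_n)$. This is purely cosmetic: the compactness, rigidity, and $\Gamma$-convergence arguments in Section~\ref{sec:local} all work sequence-by-sequence, so one can either reread the proof with the discrete parameter or extend $(q_n,h_n)$ arbitrarily to a continuous family, say by setting $p_h = q_n$ for $h\in(h_{n+1},h_n]$; either way the conclusion is the same.
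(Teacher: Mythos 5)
Your proof is correct and is essentially identical to the paper's: the same contradiction argument, using compactness of $K$ to extract a convergent subsequence of centers, then invoking Theorem~\ref{thm:curvature_norm_asymptotic_refined} together with the continuity of $q\mapsto|\Riem_q|$. The only addition is your remark about the discrete versus continuous parameter, which the paper passes over silently and which, as you say, is purely cosmetic.
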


\begin{proof}
Assume, for the sake of contradiction, that \eqref{eq:curvature_norm_asymptotic_uniform} does not hold.
Then there exist $\e>0$ and a sequence $h_i\to 0$ and $p_i\in K$ such that for every $i$,
\[
\Abs{\inf \frac{1}{h_i^4} E_{B_{h_i}(p_i)} - |\Riem_{p_i}|^2} > \e.
\]
Since $K$ is compact, we can assume that $p_i\to p\in K$.
This contradicts Theorem~\ref{thm:curvature_norm_asymptotic_refined}, since $|\Riem_{p_i}|\to |\Riem_{p}|$.
\end{proof}

\begin{lemma}
\label{lm:o_h_4}
\[
\liminf_{h \to 0} \brk{ \inf h^{-4}E_{\S_h}} \ge c\dashint_S |\Riem^\M|^2 \,d\VolgS,
\]
where $|\cdot|$ is the norm defined in Theorem~\ref{thm:curvature_norm_asymptotic} and $c$ is a universal constant.
\end{lemma}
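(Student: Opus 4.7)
The plan is to cover most of $\S_h$ by disjoint geodesic balls $B_h(p_i)$ with $p_i\in\S$, apply the uniform local lower bound of \corrref{cor:curvature_norm_asymptotic_uniform} on each of them, and recognise the resulting sum as a Riemann sum for $\int_\S |\Riem^\M|^2\,d\VolgS$.

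For each small $h>0$, choose a maximal $2h$-separated (in the $\g$-distance) family $\{p_i\}\subset \S$ at $\g$-distance at least $2h$ from $\pl\S$. By construction, the ambient balls $B_h(p_i)\subset \M$ are pairwise disjoint and are all contained in $\S_h$, since each $q\in B_h(p_i)$ satisfies $\dist(q,\S)\le h$ and projects normally onto $\S$ for $h$ small enough. In normal coordinates, $\Vol(B_h(p_i)) = \omega_n h^n(1+O(h^2))$ uniformly in $i$, while the classical tube formula gives $\Vol(\S_h) = \omega_{n-k}\Volg(\S)\,h^{n-k}(1+o(1))$. Splitting the integral defining the (volume-normalised) energy $E_{\S_h}[u]$ over the disjoint balls, discarding the positive remainder, and then minimising over $u\in W^{1,2}(\S_h;\R^n)$ yields
\beq
\label{eq:plan_1}
\Vol(\S_h)\cdot \inf E_{\S_h} \;\ge\; \sum_i \Vol(B_h(p_i))\cdot \inf E_{B_h(p_i)}.
\eeq

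By \corrref{cor:curvature_norm_asymptotic_uniform} applied with $K=\S$, there exists $\e_h\to 0$, independent of $i$, such that $\inf E_{B_h(p_i)}\ge h^4(|\Riem_{p_i}|^2-\e_h)$ for every $i$. Substituting into \eqref{eq:plan_1} and dividing by $h^4\Vol(\S_h)$ yields
\beq
\label{eq:plan_2}
h^{-4}\inf E_{\S_h} \;\ge\; \frac{1}{\Vol(\S_h)}\sum_i \Vol(B_h(p_i))\,|\Riem_{p_i}|^2 \;-\; \e_h.
\eeq
Since $p\mapsto |\Riem_p|^2$ is continuous on $\S$ (by the remark following \thmref{thm:curvature_norm_asymptotic}) and the $p_i$ form an $h$-net in $\S$ up to a boundary strip of $\VolgS$-measure $O(h)$, the volume asymptotics above identify the right-hand sum, as $h\to 0$, with a universal constant multiple $c\dashint_\S|\Riem^\M|^2\,d\VolgS$; taking the $\liminf$ of \eqref{eq:plan_2} then produces the desired bound. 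The main (and essentially bookkeeping) obstacle will be making this Riemann sum convergence uniform in the choice of centres and controlling the boundary strip near $\pl\S$, but both effects are of lower order in $h$ and are absorbed by the $\liminf$.
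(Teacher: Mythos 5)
Your strategy is essentially the paper's: pack disjoint geodesic balls of radius $h$ centered on $\S$ into $\S_h$, apply the uniform local bound of \corrref{cor:curvature_norm_asymptotic_uniform}, and compare volumes via the tube asymptotics. The bookkeeping differs, though, and your final step as written overclaims. The paper first fixes an $\e$-fine partition $\{V^i\}$ of $\S$ \emph{independent of $h$}, replaces $|\Riem^\M|^2$ by $|\Riem^\M_{p_i}|^2-\e$ on each cell, and then only needs the packed balls to fill a fixed volume fraction of each $V^i_h$; all Riemann-sum issues are handled at the fixed scale of the partition. You instead work at the single scale $h$ with a maximal $2h$-separated net, and then assert that $\frac{1}{\Volg(\S_h)}\sum_i\Volg(B_h(p_i))\,|\Riem^\M_{p_i}|^2$ is ``identified'' in the limit with a constant multiple of $\dashint_\S|\Riem^\M|^2\,d\VolgS$. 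That identification is not literally true: the density of a maximal separated net fluctuates between the packing and the covering bounds, so this quantity need not converge at all. What is true --- and all you need, since $c$ is an unspecified universal constant --- is a one-sided estimate: by maximality, the intrinsic balls of radius $2h$ about the $p_i$ cover $\S$ up to a boundary strip of $\VolgS$-measure $O(h)$, so uniform continuity of $p\mapsto|\Riem^\M_p|^2$ (remark after \thmref{thm:curvature_norm_asymptotic}) together with $\VolgS\brk{B^\S_{2h}(p_i)}\le C_k h^k$ gives $h^k\sum_i|\Riem^\M_{p_i}|^2\ge c_k\int_\S|\Riem^\M|^2\,d\VolgS-o(1)$ for a dimensional constant $c_k$; combined with $\Volg(B_h(p_i))=\omega_n h^n(1+o(1))$ and $\Volg(\S_h)=\omega_{n-k}h^{n-k}\VolgS(\S)(1+o(1))$, this yields the lemma. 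Two minor points to make explicit: the $2h$ margin from $\pl\S$ does guarantee that the nearest point of $\S$ to any $q\in B_h(p_i)$ is interior, hence $B_h(p_i)\subset\S_h$; and the $2h$-separation must be taken in the ambient distance of $(\M,\g)$ so that the balls $B_h(p_i)$ are genuinely disjoint. With these corrections your argument is complete and is a slightly more streamlined, single-scale version of the paper's two-scale (partition plus packing) proof.
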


\begin{proof}
First, we recall that the map $p \mapsto |\Riem^\M_{p}|$ is continuous.
Fix $\e>0$, and let $\{V^i\}_{i=1}^m$ be a partition of $\S$ into small regulars sets (e.g.~embedded regular simplices) such that 
\[
\frac{1}{\VolgS(\S)} \sum_{i}^m \VolgS(V^i) |\Riem^\M_{p_i}|^2 > \dashint_S |\Riem^\M|^2 \,d\VolgS - \e
\]
for some $p_i\in V^i$.
We can, furthermore, choose $V^i$ small enough such that for every $q\in V^i$, $|\Riem^\M_{q}|^2\ge |\Riem^\M_{p_i}|^2- \e$.
For $h$ small enough, denote $V^i_h = \pi_h^{-1}\brk{V^i}$.
Assuming $V^i$ is regular enough, there exists $h_\e$ (depending on the partition), such that For $h<h_\e$ we can choose disjoint balls $\{B_{h}(q^{i,j}_h)\}_{j=1}^{n^i_h}$ of radius $h$, centered at $q^{i,j}_h\in V^i$, such that $B_{h}(q^{i,j}_h)\subset V^i_h$ and 
\[
\sum_{j=1}^{n_h} \Volg(B_{h}(q^{i,j}_h)) \ge c\Volg(V^i_h)
\]
for some universal constant $c>0$ independent of $\e$, $i$, $h$ and $\S$.
Now, for a given $u_h\in  W^{1,2}(\S_h;\R^n)$, we have
\[
\begin{split}
E_{\S_h}[u_h] 
	&= \frac{1}{\Volg(\S_h)}\sum_{i=1}^m \int_{V^i_h} \dist^2(du_h, \SO{\g,\euc})\, d\Volg \\
	&\ge \frac{1}{\Volg(\S_h)}\sum_{i=1}^m \sum_{j=1}^{n_h^i} \int_{B_{h}(q^{i,j}_h)} \dist^2(du_h, \SO{\g,\euc})\, d\Volg \\
	&= \frac{1}{\Volg(\S_h)}\sum_{i=1}^m\sum_{j=1}^{n_h^i} \Volg(B_{h}(q^{i,j}_h))\, E_{B_{h}(q^{i,j}_h)}[u_h]\\
	&\ge \frac{1}{\Volg(\S_h)}\sum_{i=1}^m\sum_{j=1}^{n_h^i} \Volg(B_{h}(q^{i,j}_h))\, \inf E_{B_{h}(q^{i,j}_h)}.\end{split}
\] 
Using Theorem~\ref{thm:curvature_norm_asymptotic} we then have
\[
\begin{split}
E_{\S_h}[u_h] 
	&\ge \frac{1}{\Volg(\S_h)}\sum_{i=1}^m\sum_{j=1}^{n_h^i} \Volg(B_{h}(q^{i,j}_h))\, \inf E_{B_{h}(q^{i,j}_h)}\\
	&\ge \frac{1}{\Volg(\S_h)}\sum_{i=1}^m\sum_{j=1}^{n_h^i} \Volg(B_{h}(q^{i,j}_h))\,\brk{h^4 |\Riem^\M_{q^{i,j}_h}|^2 + o(h^4)}\\
	&\ge \frac{1}{\Volg(\S_h)}\sum_{i=1}^m\sum_{j=1}^{n_h^i} \Volg(B_{h}(q^{i,j}_h))\,\brk{h^4\brk{|\Riem^\M_{p_i}|^2-\e} + o(h^4)}\\
	&\ge c h^4\frac{1}{\Volg(\S_h)}\sum_{i=1}^m \brk{|\Riem^\M_{p_i}|^2-\e} \Volg(V^i_h) + o(h^4) \\
	&= c h^4\brk{\frac{1}{\Volg(\S_h)}\sum_{i=1}^m |\Riem^\M_{p_i}|^2 \Volg(V^i_h) - \e} + o(h^4),
\end{split}
\] 
where we used Corollary~\ref{cor:curvature_norm_asymptotic_uniform} for $K=\S$ to take the $o(h^4)$ term uniformly with respect to $q^{i,j}_h$ in last line.
Now, using the fact that $\Volg(V^i_h) = h^{n-k}\VolgS(V^i)(1+o(1))$ and $\Volg(\S_h) = h^{n-k}\VolgS(\S)(1+o(1))$, we have
\[
\begin{split}
E_{\S_h}[u_h] 
	&\ge c h^4\brk{\frac{1}{\Volg(\S_h)}\sum_{i=1}^m |\Riem^\M_{p_i}|^2 \Volg(V^i_h) -\e}+ o(h^4)\\
	&= c h^4 \brk{\frac{1}{\VolgS(\S)}\sum_{i=1}^m |\Riem^\M_{p_i}|^2 \VolgS(V^i) -\e} + o(h^4)\\
	&\ge c h^4\brk{\dashint_\S |\Riem^\M|^2 \,d\VolgS - 2\e} + o(h^4).
\end{split}
\] 
Taking the infimum over $u_h$, dividing by $h^{4}$ and taking the limit $h\to 0$, we then have
\[
\liminf \brk{\inf h^{-4}E_{\S_h}} \ge c \brk{\dashint_\S |\Riem^\M|^2 \,d\VolgS - 2\e}.
\]
Since $\e$ is arbitrary, the proof is complete.
\end{proof}

{\footnotesize
\newcommand{\etalchar}[1]{$^{#1}$}
\providecommand{\bysame}{\leavevmode\hbox to3em{\hrulefill}\thinspace}
\providecommand{\MR}{\relax\ifhmode\unskip\space\fi MR }
\providecommand{\MRhref}[2]{%
  \href{http://www.ams.org/mathscinet-getitem?mr=#1}{#2}
}
\providecommand{\href}[2]{#2}

\bibliographystyle{amsalpha}
}

\end{document}